\documentclass[11pt]{article}
\usepackage[margin=1in]{geometry}
\usepackage{amsmath,amssymb,amsthm,mathtools}
\usepackage{bm}
\usepackage{enumitem}
\usepackage{float}
\usepackage[colorlinks=true,linkcolor=blue,citecolor=blue,urlcolor=blue]{hyperref}

\newtheorem{theorem}{Theorem}[section]

\newtheorem{lemma}[theorem]{Lemma}

\newtheorem{assumption}[theorem]{Assumption}

\theoremstyle{definition}

\numberwithin{equation}{section}

\newcommand{\E}{\mathbb E}
\newcommand{\Pp}{\mathbb P}
\newcommand{\R}{\mathbb R}
\newcommand{\op}{\operatorname{op}}
\newcommand{\argmin}{\operatorname*{arg\,min}}

\title{A GPH-Filtered Hannan--Rissanen Information Criterion for ARFIMA Order Selection}
\author{Chunhao Cai\\
School of Mathematics (Zhuhai), Sun Yat-sen University\\
\texttt{caichh9@mail.sysu.edu.cn}}
\date{\today}

\begin{document}
\maketitle

\begin{abstract}
We propose a simple two-stage order selector for finite-order ARFIMA models.  First, a preliminary log-periodogram estimate of the memory parameter is used to fractionally filter the data.  Second, a Hannan--Rissanen residual construction is applied to the filtered series, and the autoregressive and moving-average orders are selected by a generalized information criterion over a growing candidate rectangle.  The search bounds are allowed to satisfy \(P_n,Q_n\to\infty\), whereas the true orders remain fixed and finite.  The penalty is allowed to be larger than the ordinary BIC penalty so that it dominates the error introduced by preliminary long-memory estimation and by the Hannan--Rissanen residual approximation.  We prove a uniform residual-variance approximation over the growing rectangle and combine it with a population separation argument between the true finite ARMA representation and underfitted alternatives.  The resulting generalized information-criterion selector is consistent.
\end{abstract}

\noindent\textbf{2020 Mathematics Subject Classification.} Primary 62M10; Secondary 62F12, 62M15, 60G10.

\noindent\textbf{Keywords.} ARFIMA; order selection; long memory; log-periodogram regression; Hannan--Rissanen residuals; generalized information criterion; growing sieve.

\section{Introduction}

Huang, Chan, Chen and Ing \cite{HuangChanChenIng2022} prove consistent BIC-type order selection for ARFIMA processes by minimizing a conditional-sum-of-squares criterion over a prescribed finite rectangle
\[
    0\le p\le P,
    \qquad
    0\le q\le Q.
\]
This formulation is natural after finite upper bounds \(P\) and \(Q\) have been fixed.  It is less satisfactory as a statistical-computational principle when a data analyst is not willing to regard any fixed pair of upper bounds as final before seeing larger samples.  Although the true AR and MA orders are finite, the candidate region itself may reasonably be allowed to expand slowly with the sample size.  The present paper studies this growing-sieve version of the order-selection problem,
\[
    0\le p\le P_n,
    \qquad
    0\le q\le Q_n,
    \qquad
    P_n,Q_n\to\infty.
\]

Our procedure is deliberately simple.  We first estimate the memory parameter by a log-periodogram estimator of Geweke--Porter-Hudak type \cite{GewekePorterHudak1983,Robinson1995LogPeriodogram,Velasco2000}.  We then fractionally filter the data, construct Hannan--Rissanen residuals \cite{HannanRissanen1982}, and minimize a generalized information criterion over the growing rectangle.  The resulting algorithm avoids a full nonlinear search over \((p,q,d)\).  Its proof is also transparent: the preliminary filtering error and the Hannan--Rissanen residual error are controlled first, and these estimates are then combined with a uniform residual-variance approximation and an information-criterion comparison.

We use the standard ARFIMA notation; see, for example, Granger and Joyeux \cite{GrangerJoyeux1980}, Hosking \cite{Hosking1981}, Beran \cite{Beran1994}, and Sowell \cite{Sowell1992}.  Throughout the paper, the observed process \((Y_t)_{t\in\mathbb Z}\) satisfies the finite-order ARFIMA equation
\begin{equation}
    \Phi_0(B)(1-B)^{d_0}Y_t=\Theta_0(B)\varepsilon_t,
\end{equation}
where \(d_0\in(-1/2,1/2)\), the unknown orders are \((p_0,q_0)\), and
\begin{equation}
    \Phi_0(z)=1-\sum_{j=1}^{p_0}\alpha_{0,j}z^j,
    \qquad
    \Theta_0(z)=1+\sum_{k=1}^{q_0}\beta_{0,k}z^k.
\end{equation}
The zero-order conventions are \(\Phi_0\equiv1\) if \(p_0=0\) and \(\Theta_0\equiv1\) if \(q_0=0\).  After the true fractional filter is applied,
\begin{equation}
    X_t=(1-B)^{d_0}Y_t,
\end{equation}
the short-memory component \((X_t)\) satisfies
\begin{equation}
    \Phi_0(B)X_t=\Theta_0(B)\varepsilon_t.
\end{equation}

The standing assumptions are as follows.
\begin{assumption}\label{ass:truth}
The following conditions hold.
\begin{enumerate}[label=(\roman*)]
    \item \(d_0\in[d_-,d_+]\subset(-1/2,1/2)\).
    \item \(\Phi_0\) and \(\Theta_0\) have no zeros on \(|z|\le 1+\eta_0\) for some \(\eta_0>0\).
    \item \(\Phi_0\) and \(\Theta_0\) are coprime.
    \item If \(p_0>0\), then \(\alpha_{0,p_0}\ne0\).  If \(q_0>0\), then \(\beta_{0,q_0}\ne0\).
    \item \((\varepsilon_t)\) are independent, mean-zero, sub-Gaussian random variables with \(\E\varepsilon_t^2=\sigma_\varepsilon^2\in(0,\infty)\).
\end{enumerate}
\end{assumption}

Assumption \ref{ass:truth} implies that \((X_t)\) is causal and invertible, in the standard ARMA sense; see Brockwell and Davis \cite[Chapter 3]{BrockwellDavis1991}.  In particular, it has the causal representation
\begin{equation}\label{eq:causal-representation}
    X_t=\sum_{\ell=0}^{\infty}\psi_\ell\varepsilon_{t-\ell},
    \qquad
    \sum_{\ell=0}^{\infty}|\psi_\ell|r^\ell<\infty
\end{equation}
for some \(r>1\), and the infinite autoregressive representation
\begin{equation}\label{eq:infinite-ar-representation}
    X_t=\sum_{\ell=1}^{\infty}\varphi_{0,\ell}X_{t-\ell}+\varepsilon_t,
    \qquad
    \sum_{\ell=1}^{\infty}|\varphi_{0,\ell}|R^\ell<\infty
\end{equation}
for some \(R>1\).  This exponential summability is the only tail property of the inverse AR representation used below.

Let \(P_n\) and \(Q_n\) denote the candidate-order upper bounds, and write
\begin{equation}
    M_n=P_n\vee Q_n,
    \qquad
    s_n=P_n+Q_n.
\end{equation}
We assume that \(P_n\ge p_0\) and \(Q_n\ge q_0\) eventually.  Under the growth, separation and penalty conditions stated later, the proposed selector satisfies
\[
    \Pp\{(\widehat p_n,\widehat q_n)=(p_0,q_0)\}\longrightarrow1.
\]
Section \ref{sec:method-main} defines the GPH-filtered Hannan--Rissanen information criterion and states the main theorem.  Section \ref{sec:simulation} reports a simulation study following the same computational order as the theoretical construction.  Section \ref{sec:approximation} proves the approximation lemmas, and Section \ref{sec:proof-main} proves the main theorem.

\section{The GPH--HR--GIC selector and main result}\label{sec:method-main}

\subsection{Preliminary GPH filtering}

We first estimate the memory parameter.  Let \(m_n\) be the number of low Fourier frequencies used in the log-periodogram regression, and assume throughout this subsection that
\begin{equation}\label{eq:gph-bandwidth}
    m_n\to\infty,
    \qquad
    m_n/n\to0 .
\end{equation}
Put
\begin{equation}
    \lambda_j=2\pi j/n,
    \qquad
    x_j=-\log\{4\sin^2(\lambda_j/2)\},
    \qquad
    1\le j\le m_n .
\end{equation}
For the periodogram \(I_Y(\lambda_j)\) of \((Y_t)\), define the Geweke--Porter-Hudak slope estimator
\begin{equation}\label{eq:gph-estimator}
    \widetilde d=
    \frac{\sum_{j=1}^{m_n}(x_j-\bar x_m)\log I_Y(\lambda_j)}
    {\sum_{j=1}^{m_n}(x_j-\bar x_m)^2},
    \qquad
    \bar x_m=m_n^{-1}\sum_{j=1}^{m_n}x_j .
\end{equation}
The proof below only uses the following preliminary rate.  The condition \eqref{eq:gph-bandwidth} is the low-frequency bandwidth condition for the log-periodogram regression; the finite-order causal-invertible ARMA component in Assumption \ref{ass:truth} supplies the required second-order smoothness of the short-memory spectral factor at the origin.
\begin{lemma}\label{lem:gph-rate}
Suppose Assumption \ref{ass:truth} holds and \(m_n\) satisfies \eqref{eq:gph-bandwidth}.  Then the estimator \eqref{eq:gph-estimator} satisfies
\begin{equation}\label{eq:gph-rate}
    |\widetilde d-d_0|=O_p(a_{d,n}),
    \qquad
    a_{d,n}=m_n^{-1/2}+m_n^2n^{-2}.
\end{equation}
In particular, \(a_{d,n}\downarrow0\).
\end{lemma}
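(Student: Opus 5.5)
The plan is to use the standard bias--variance decomposition of the log-periodogram regression, following Robinson \cite{Robinson1995LogPeriodogram} and, for the non-Gaussian sub-Gaussian innovations of Assumption \ref{ass:truth}(v), Velasco \cite{Velasco2000}.

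The spectral density of \((Y_t)\) factors as
\[
f_Y(\lambda)=|1-e^{i\lambda}|^{-2d_0}f_X(\lambda),\qquad f_X(\lambda)=\frac{\sigma_\varepsilon^2}{2\pi}\frac{|\Theta_0(e^{i\lambda})|^2}{|\Phi_0(e^{i\lambda})|^2}.
\]
Since \(|1-e^{i\lambda}|^2=4\sin^2(\lambda/2)\), we have
\[
\log f_Y(\lambda_j)=\log f_X(0)+d_0x_j+\log\{f_X(\lambda_j)/f_X(0)\}.
\]
Writing \(\log I_Y(\lambda_j)=\log f_Y(\lambda_j)+U_j\) with \(U_j=\log\{I_Y(\lambda_j)/f_Y(\lambda_j)\}\) and \(a_j=x_j-\bar x_m\), \(S_{xx}=\sum_j a_j^2\), this gives
\[
\widetilde d-d_0=\frac{\sum_j a_j\log\{f_X(\lambda_j)/f_X(0)\}}{S_{xx}}+\frac{\sum_j a_jU_j}{S_{xx}}.
\]
It is elementary that \(x_j=-2\log\lambda_j+O(\lambda_j^2)\) and that \(S_{xx}\sim 4m_n\). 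Also \(\max_j|a_j|=O(\log m_n)\), and \(\sum_j|a_j|=O(m_n)\).

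\textbf{Bias term.} By Assumption \ref{ass:truth}(ii), \(f_X\) is real-analytic and even near the origin and bounded away from zero, so \(\log f_X(\lambda)-\log f_X(0)=c\lambda^2+O(\lambda^4)\). Then
\[
\Bigl|\sum_j a_j\,c\lambda_j^2\Bigr|\le C\lambda_{m}^2\sum_j|a_j|=O(m_n^3n^{-2}).
\]
Dividing by \(S_{xx}\asymp m_n\) gives \(O(m_n^2n^{-2})\), which is the second part of \(a_{d,n}\).

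\textbf{Stochastic term.} I would show \(\E(\sum_ja_jU_j)^2=O(m_n)\), which gives an \(O_p(m_n^{-1/2})\) contribution after dividing by \(S_{xx}\). The diagonal part uses \(\E U_j^2=O(1)\) uniformly in \(j\). The cross terms require the standard approximation results for normalized discrete Fourier transforms \(w_Y(\lambda_j)/f_Y(\lambda_j)^{1/2}\) of a long-memory linear process, from Robinson \cite{Robinson1995LogPeriodogram}. These results state that, for \(j\) beyond a trimming index \(\ell_n\to\infty\), the normalized DFTs are nearly uncorrelated with errors \(O(\log j/j)\). For independent non-Gaussian innovations, Velasco \cite{Velasco2000} supplies moment bounds and weak-dependence bounds on \(\E U_jU_k\), \(j\ne k\), valid under sub-Gaussian (hence all moments finite) innovations. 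The smallest frequencies \(j\le\ell_n\) are handled crudely: each \(U_j\) has bounded second moment (by a lower bound on the density of the periodogram ratio, as in Velasco), so their total contribution is \(O_p(\ell_n\log m_n)\). Choosing \(\ell_n\) growing slowly, e.g. \(\ell_n=\log m_n\), this is \(o_p(m_n^{1/2})\).

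\textbf{Main obstacle.} The main difficulty is controlling \(\log I_Y\) for non-Gaussian data. The logarithm is unbounded near zero, so one needs a uniform bound on \(\E|\log\{I_Y(\lambda_j)/f_Y(\lambda_j)\}|^{2+\delta}\) together with covariance decay across frequencies. I would not reprove this but import it directly from Velasco \cite{Velasco2000}, after checking that its conditions hold here: the linear-process representation with square-summable coefficients, a smooth short-memory factor, and finite higher moments with a smooth density condition. If that density-type condition is unavailable under sub-Gaussianity alone, I would fall back on a pooled or tapered variant, or add it as a mild extra assumption.

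Finally, \(a_{d,n}\to0\) follows from \(m_n\to\infty\) and \(m_n/n\to0\) in \eqref{eq:gph-bandwidth}. Replacing \(a_{d,n}\) by \(\sup_{k\ge n}a_{d,k}\) if necessary makes it monotone without affecting the rate.
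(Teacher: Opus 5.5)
Your proposal is the paper's argument written out in more detail. You split $\widetilde d-d_0$ exactly into a deterministic term of order $m_n^2n^{-2}$, coming from $\log f_X(\lambda)-\log f_X(0)=O(\lambda^2)$, and a stochastic term of order $m_n^{-1/2}$ imported from Robinson and Velasco; the paper also cites Hurvich--Deo--Brodsky for the untrimmed Gaussian estimator. Your caveat that the non-Gaussian log-periodogram results need conditions on the innovations beyond sub-Gaussianity is well taken, and it applies equally to the paper's citation-based proof.

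One small correction concerns your trimming choice. Robinson's cross-covariance bound for normalized Fourier transforms at frequencies $k<j$ is $O(\log j/k)$, not $O(\log j/j)$. So you only get $\mathrm{Cov}(U_j,U_k)=O(\log^2 j/k^2)$, and with $\ell_n=\log m_n$ the cross terms for $k\ge\ell_n$ sum to order $m_n\log^2 m_n$, losing a logarithmic factor. A choice such as $\ell_n=m_n^{1/4}$ repairs this. The low-frequency block stays at $O_p(\ell_n\log m_n)=o_p(m_n^{1/2})$. The remaining cross terms sum to $O(m_n\log^3 m_n/\ell_n)=o(m_n)$. Hence $\E(\sum_j a_jU_j)^2=O(m_n)$, as you need.
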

\begin{proof}
This is the standard log-periodogram regression rate: in the Gaussian case it follows from Robinson \cite[Sections 2--3]{Robinson1995LogPeriodogram} and the bias-variance calculation of Hurvich, Deo and Brodsky \cite[Section 2]{HurvichDeoBrodsky1998}; for linear processes that are not necessarily Gaussian, the corresponding log-periodogram consistency theory is given by Velasco \cite[Section 2]{Velasco2000}.  Under Assumption \ref{ass:truth}, the spectral density admits
\[
    f_Y(\lambda)=|1-e^{-i\lambda}|^{-2d_0}f_X(\lambda),
    \qquad
    \log f_X(\lambda)=\log f_X(0)+O(\lambda^2),
    \qquad \lambda\to0,
\]
so the second-order spectral-bias term is \(O(m_n^2n^{-2})\), while the low-frequency periodogram term is \(O_p(m_n^{-1/2})\).  This gives \eqref{eq:gph-rate}.
\end{proof}

With this preliminary estimate, define the feasible fractionally filtered series by
\begin{equation}
    \widetilde X_t=(1-B)^{\widetilde d}Y_t.
\end{equation}

\subsection{Filtered ARMA-type selector}

After the preliminary estimate \(\widetilde d\) has been obtained, we apply an ARMA-type information criterion to the filtered series \(\widetilde X_t\).  The construction below is purely computational.  The ideal short-memory objects used to justify the procedure are introduced later, in the proof sections.

The moving-average part of an ARMA fit requires innovation proxies.  We obtain them by a preliminary high-order autoregression on the filtered series.  Let \(\mathcal I_n\) be a common trimmed sample used in all least-squares criteria,
\begin{equation}
    \mathcal I_n=\{t_N,t_N+1,\ldots,n\},
    \qquad
    N=|\mathcal I_n|\asymp n .
\end{equation}
The trimming only removes the initial observations needed for fractional filtering and lags.  Residuals used with lags in the candidate regressions are computed on the corresponding enlarged lag range; this changes only endpoint observations and preserves \(N\asymp n\).

Choose a deterministic preliminary autoregressive order \(h_n\to\infty\).  For \(t\in\mathcal I_n\), set
\begin{equation}
    \widetilde X_{t,h_n}
    =(\widetilde X_{t-1},\ldots,\widetilde X_{t-h_n})^\top .
\end{equation}
Define the high-order least-squares coefficient vector by
\begin{equation}
    \widehat\varphi
    =(\widehat\varphi_1,\ldots,\widehat\varphi_{h_n})^\top
    \in
    \argmin_{\varphi\in\R^{h_n}}
    N^{-1}\sum_{t\in\mathcal I_n}
    (\widetilde X_t-\widetilde X_{t,h_n}^\top\varphi)^2 .
\end{equation}
The corresponding Hannan--Rissanen residuals are
\begin{equation}\label{eq:hr-residual}
    \widetilde e_t
    =\widetilde X_t-\widetilde X_{t,h_n}^\top\widehat\varphi,
    \qquad t\in\mathcal I_n .
\end{equation}
They are used as feasible innovation proxies in the ARMA-type regressions below.

For each candidate order \((p,q)\), define the feasible regressor vector
\begin{equation}
    \widetilde Z_t(p,q)=
    (\widetilde X_{t-1},\ldots,\widetilde X_{t-p},
    -\widetilde e_{t-1},\ldots,-\widetilde e_{t-q})^\top,
\end{equation}
with the empty-block convention when \(p=0\) or \(q=0\).  If
\(\gamma=(a_1,\ldots,a_p,c_1,\ldots,c_q)^\top\), then
\begin{equation}
    \widetilde X_t-\widetilde Z_t(p,q)^\top\gamma
    =\widetilde X_t-
    \sum_{j=1}^{p}a_j\widetilde X_{t-j}
    +\sum_{k=1}^{q}c_k\widetilde e_{t-k}.
\end{equation}
Thus the negative signs in the last block of \(\widetilde Z_t(p,q)\) only encode the ARMA sign convention.

Fix a large compact radius \(K<\infty\), and put
\begin{equation}\label{eq:K-ball}
    \mathcal K_{p,q}(K)=\{\gamma\in\R^{p+q}:\|\gamma\|_2\le K\}.
\end{equation}
For candidate order \((p,q)\), define the Hannan--Rissanen residual variance by
\begin{equation}
    \widehat\sigma_n^2(p,q)=
    \inf_{\gamma\in\mathcal K_{p,q}(K)}
    N^{-1}\sum_{t\in\mathcal I_n}
    (\widetilde X_t-\widetilde Z_t(p,q)^\top\gamma)^2.
\end{equation}
The compact ball stabilizes singular or nearly singular overfitted regressions.  In applications \(K\) may be taken large; the proof below specifies the mild containment requirement on \(K\).

The last ingredient is an order penalty.  If no penalty is used, adding a lag can only improve, or at least not worsen, the least-squares fit; for instance,
\begin{equation}
    \widehat\sigma_n^2(p+1,q)\le \widehat\sigma_n^2(p,q),
    \qquad
    \widehat\sigma_n^2(p,q+1)\le \widehat\sigma_n^2(p,q),
\end{equation}
whenever the two candidate orders are inside the search rectangle.  Thus minimizing the residual variance alone would tend to favor unnecessarily large orders.  We therefore choose a deterministic sequence \(\pi_n>0\) before the minimization is carried out, as in generalized information criteria of Schwarz and Hannan--Quinn type \cite{Schwarz1978,HannanQuinn1979}.  It charges one unit of complexity for each AR or MA coefficient:
\begin{equation}
    \text{fit term}=\log \widehat\sigma_n^2(p,q),
    \qquad
    \text{complexity cost}=(p+q)\pi_n .
\end{equation}
Equivalently, adding one more AR lag is accepted only if the reduction in log residual variance is larger than the penalty, since
\begin{equation}
    \mathrm{HIC}_n(p+1,q)<\mathrm{HIC}_n(p,q)
    \quad\Longrightarrow\quad
    \log\frac{\widehat\sigma_n^2(p+1,q)}{\widehat\sigma_n^2(p,q)}<-\pi_n,
\end{equation}
and the same interpretation applies to adding one MA lag.  Thus a larger \(\pi_n\) gives a more conservative selector, whereas a smaller \(\pi_n\) allows more parameters.  The asymptotic penalty window used in the proof is stated later in Section \ref{sec:proof-main}; in finite-sample computation one may use a BIC-scale choice such as \(c\log n/n\), as in the simulation study.

With this penalty, define
\begin{equation}
    \mathrm{HIC}_n(p,q)=\log \widehat\sigma_n^2(p,q)+(p+q)\pi_n .
\end{equation}
The proposed order selector is
\begin{equation}\label{eq:selector}
    (\widehat p_n,\widehat q_n)
    \in\argmin_{0\le p\le P_n,\,0\le q\le Q_n}\mathrm{HIC}_n(p,q).
\end{equation}
Thus the computing procedure is: estimate \(d_0\), filter the data, construct Hannan--Rissanen residuals, compute the residual variance for each \((p,q)\), and minimize the penalized criterion over the growing rectangle.

\subsection{Main theorem}

The theorem states that the selector defined in \eqref{eq:selector} consistently recovers the finite AR and MA orders in Assumption \ref{ass:truth}.  The detailed approximation, separation and penalty conditions are introduced in Sections \ref{sec:approximation} and \ref{sec:proof-main}, where they are used in the proof.
\begin{theorem}\label{thm:order-consistency}
Under Assumptions \ref{ass:truth} and \ref{ass:hn-growth}, under the preliminary rate condition in Lemma \ref{lem:gph-rate}, and under the separation and penalty-window conditions specified in Section \ref{sec:proof-main}, the selector \eqref{eq:selector} satisfies
\begin{equation}\label{eq:main-consistency}
    \Pp\{(\widehat p_n,\widehat q_n)=(p_0,q_0)\}\longrightarrow1.
\end{equation}
\end{theorem}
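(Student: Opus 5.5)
The proof will compare \(\widehat\sigma_n^2(p,q)\) with an ideal, infeasible residual variance and then separate overfitted candidates from underfitted ones. The ideal variance is built from the true filtered series \(X_t=(1-B)^{d_0}Y_t\) and the true innovations \(\varepsilon_t\):
\[
\sigma_n^{2}(p,q)=\inf_{\gamma\in\mathcal K_{p,q}(K)}N^{-1}\sum_{t\in\mathcal I_n}(X_t-Z_t(p,q)^\top\gamma)^2,
\qquad
Z_t(p,q)=(X_{t-1},\ldots,X_{t-p},-\varepsilon_{t-1},\ldots,-\varepsilon_{t-q})^\top .
\]
Its population counterpart \(\sigma^2(p,q)\) is the same infimum with the sample mean replaced by the expectation.

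\textbf{Step 1: uniform approximation.} I would first show that
\[
\max_{p\le P_n,\,q\le Q_n}|\widehat\sigma_n^2(p,q)-\sigma_n^2(p,q)|=O_p(r_n),
\]
where \(r_n\) collects three error sources.

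\emph{Filtering error.} Since \(\widetilde X_t-X_t=((1-B)^{\widetilde d-d_0}-1)X_t\), a mean-value expansion in \(d\) bounds the average \(N^{-1}\sum(\widetilde X_t-X_t)^2\) by \(O_p(a_{d,n}^2\log^2 n)\). The \(\log^2 n\) factor comes from the derivative \(\log(1-B)\) of the fractional filter. Lemma \ref{lem:gph-rate} supplies the rate \(a_{d,n}\).

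\emph{Hannan--Rissanen error.} The bound \(N^{-1}\sum(\widetilde e_t-\varepsilon_t)^2=O_p(h_n/n+R^{-2h_n}+\text{filtering error})\) follows from the exponential decay \eqref{eq:infinite-ar-representation} of the AR(\(\infty\)) coefficients. It also needs the high-order least-squares rate, which uses the growth condition on \(h_n\) in Assumption \ref{ass:hn-growth} together with sub-Gaussian concentration of sample autocovariances up to lag \(h_n+M_n\).

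\emph{Propagation to the criterion.} Because \(\|\gamma\|_2\le K\), Cauchy--Schwarz turns these regressor perturbations into a perturbation of the objective of order \(K\sqrt{s_n}\) times the root-mean-square errors, uniformly over the rectangle. The infimum is \(1\)-Lipschitz in the objective, so the same bound holds for \(\widehat\sigma_n^2-\sigma_n^2\). Separately, \(\max|\sigma_n^2(p,q)-\sigma^2(p,q)|=O_p(s_n\sqrt{\log n/n})\) follows from a union bound over entries of the sample covariance matrix.

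\textbf{Step 2: population separation.} For \(p\ge p_0\) and \(q\ge q_0\), the true coefficient vector, padded with zeros, lies in \(\mathcal K_{p,q}(K)\) (this is the containment requirement on \(K\)). Hence \(\sigma^2(p,q)=\sigma_\varepsilon^2\). If \(p<p_0\) or \(q<q_0\), coprimality and Assumption \ref{ass:truth}(iv) show that no ARMA\((p,q)\)-type regression on past \(X\) and past \(\varepsilon\) reproduces \(\varepsilon_t\). The residual then contains a nonzero component measurable with respect to the past, so \(\sigma^2(p,q)\ge\sigma_\varepsilon^2+\delta\) for some \(\delta>0\). Making \(\delta\) uniform over the infinitely many underfitted pairs, where only one of \(p,q\) is too small but the other may be as large as \(M_n\), is exactly what the separation condition of Section \ref{sec:proof-main} should supply. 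I would argue it by a projection-onto-the-past bound that does not depend on the extra lags.

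\textbf{Step 3: information-criterion comparison.} Underfitted candidates are ruled out because
\[
\mathrm{HIC}_n(p,q)-\mathrm{HIC}_n(p_0,q_0)\ge\log(1+\delta/\sigma_\varepsilon^2)-o_p(1)-(p_0+q_0)\pi_n>0
\]
with probability tending to one, uniformly over that set.

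For overfitted candidates \((p,q)\ne(p_0,q_0)\) with \(p\ge p_0\) and \(q\ge q_0\), the criterion difference is at least \((p+q-p_0-q_0)\pi_n-O_p(r_n+s_n\sqrt{\log n/n})\). A crude version of this bound would need \(\pi_n\) to dominate \(s_n\) times the errors. I would refine it using the standard nested least-squares argument: the variance reduction from \(k\) extra regressors is \(O_p(k\log n/n)\) plus an approximation term linear in \(k\). Combined with the penalty-window condition \(\pi_n\gg\log n/n+r_n\) and \(s_n\pi_n\to0\), this gives positivity uniformly in \(k\ge1\).

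\textbf{Main obstacle.} The main obstacle is this last overfitting bound. The errors must scale per added parameter rather than with \(s_n\), which requires maximal inequalities for sample covariances of lagged \(X\) and \(\varepsilon\), uniform over the growing dimension. The filtering error must also be controlled uniformly over the rectangle rather than for each order separately. I would attempt this first through a per-lag decomposition of the residual sum of squares.
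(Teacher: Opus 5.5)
Your Step 1 follows the paper's perturbation argument, apart from the concentration piece discussed below. The real gap is the overfitting step, which you leave open, and the refinement you propose would not work as described. A nested least-squares bound on the variance reduction from $k$ added regressors needs a well-conditioned Gram matrix, and ARMA regressors do not have one. Whenever $p\ge p_0+1$ and $q\ge q_0+1$, the ideal regressors satisfy the exact relation $\Phi_0(B)X_{t-1}-\Theta_0(B)\varepsilon_{t-1}=0$, so $\lambda_{\min}(\Sigma^0_{p,q})=0$. In the feasible regression this null direction is broken only by the $O_p(r_{e,n})$ errors in $\widetilde X_t,\widetilde e_t$. Those errors depend on the whole sample through $\widehat\varphi$ and $\widetilde d$, so $\varepsilon_t$ is not orthogonal to $\widetilde e_{t-k}$ either. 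This is why the paper works on the ball $\mathcal K_{p,q}(K)$ and only uses the upper eigenvalue bound of Lemma~\ref{lem:pop-cov-bound}.

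More to the point, no refinement is needed. The penalty window of Section~\ref{sec:proof-main} is $\omega_n/\pi_n\to0$ with $\omega_n=(M_n\log n/N)^{1/2}+M_n^{1/2}r_{e,n}+M_nr_{e,n}^2$, so the penalty is deliberately above BIC scale. Because the zero-padded true vector lies in the ball, $\sigma^{2,*}_{K,p,q}=\sigma_\varepsilon^2$ exactly for every overfitted pair. The ``crude'' bound $\mathrm{HIC}_n(p,q)-\mathrm{HIC}_n(p_0,q_0)\ge\pi_n-2A_n$, with $A_n$ the uniform log-variance error, is then the whole argument.

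That argument does require $A_n=O_p(\omega_n)$, and here your Step 1 is too weak. Your entrywise union bound gives $s_n(\log n/n)^{1/2}$ for the ideal-versus-population error. This is a factor $\asymp M_n^{1/2}$ worse than the paper's $(M_n\log n/N)^{1/2}$ and is not controlled by the stated window in general. The paper gets the sharper rate from a single operator-norm bound, using a $1/4$-net plus a Bernstein inequality for dependent data. It applies this to the sample covariance of the maximal vector $(X_t,X_{t-1},\ldots,X_{t-P_n},-\varepsilon_{t-1},\ldots,-\varepsilon_{t-Q_n})^\top$. Every $(X_t,Z_t^0(p,q)^\top)^\top$ is a coordinate subvector of it, so the maximum over the rectangle costs nothing extra.

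Step 2 contains a false claim. In general there is no gap $\delta>0$ uniform over the growing underfitted set, so no projection bound independent of the extra lags can exist. If $p_0\ge1$, the pair $(0,Q_n)$ is underfitted, yet taking $c_k=-\psi_k$ gives $\sigma^{2,*}_{K,0,Q_n}\le\sigma_\varepsilon^2(1+\sum_{\ell>Q_n}\psi_\ell^2)\to\sigma_\varepsilon^2$ as soon as $K\ge(\sum_{\ell\ge1}\psi_\ell^2)^{1/2}$, and $K$ may be taken large. If $q_0\ge1$, the pair $(P_n,0)$ with $a_j=\varphi_{0,j}$ gives $\sigma_\varepsilon^2+\eta_{P_n}^2$. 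Hence, unless $p_0=q_0=0$, $\Delta_n\to0$, geometrically fast in $Q_n$ (respectively $P_n$).

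The paper therefore imposes separation as a rate condition, $M_n\pi_n/\Delta_n\to0$. It bounds underfitted candidates below by $\Delta_n-2A_n-2M_n\pi_n$, which is positive with probability tending to one because $\omega_n\ll\pi_n\ll\Delta_n/M_n$ forces $A_n=o_p(\Delta_n)$. Your Step 3 inequality with a fixed $\delta$ and an $o_p(1)$ error must be replaced by this rate comparison. Your $-(p_0+q_0)\pi_n$ in place of the paper's $-2M_n\pi_n$ is correct and in fact sharper.
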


\section{Simulation study}\label{sec:simulation}

We include a small Monte Carlo check that follows the theoretical construction in the same order as the method: generate a finite-order ARFIMA series, estimate the memory parameter, fractionally filter the data, construct Hannan--Rissanen residuals, and then minimize the generalized information criterion over a growing candidate rectangle.  The purpose is not to tune the finite-sample implementation exhaustively, but to illustrate the algorithmic sequence used in the consistency theorem.

The innovations are standard normal.  For each design we generate
\[
    \Phi_0(B)(1-B)^{d_0}Y_t=\Theta_0(B)\varepsilon_t,
\]
apply the GPH estimator with bandwidth \(m=\lfloor n^{0.65}\rfloor\), construct \(\widetilde X_t=(1-B)^{\widetilde d}Y_t\), compute Hannan--Rissanen residuals using
\[
    h=\max\{30,\lfloor 3\log n\rfloor\},
\]
and minimize \(\mathrm{HIC}_n(p,q)\) over the growing rectangle
\[
    0\le p\le P_n,
    \qquad
    0\le q\le Q_n,
    \qquad
    P_n=Q_n=\lfloor 1.25\log n\rfloor.
\]
Thus the search ranges are \(10,11,12\) for \(n=4096,8192,16384\), respectively.  For this finite-sample diagnostic we use the BIC-scale generalized penalty
\[
    \pi_n=3\frac{\log n}{n}.
\]
The theoretical result allows larger penalties when the preliminary filtering and residual-construction errors require them.  The present simulation is meant to mirror the steps of the theorem rather than to verify every asymptotic rate restriction numerically.  The compact coefficient ball in \eqref{eq:K-ball} was taken large enough that the numerical solution coincides with the usual least-squares HR residual variance in these designs.

\begin{table}[H]
\centering
\caption{Monte Carlo designs.  The AR coefficient vector is \((\alpha_{0,1},\ldots,\alpha_{0,p_0})\), and the MA coefficient vector is \((\beta_{0,1},\ldots,\beta_{0,q_0})\).}

\small
\begin{tabular}{c c c c c}
\hline
Model & \(d_0\) & AR coefficients & MA coefficients & \((p_0,q_0)\) \\
\hline
M1 & 0.30 & \((0.40,-0.30,0.35)\) & \((-0.076,0.086,-0.377,0.434)\) & \((3,4)\) \\
M2 & 0.25 & \((0.50,-0.40,0.30,-0.35)\) & \((0.172,-0.300,0.442,-0.135,-0.395)\) & \((4,5)\) \\
M3 & 0.35 & \((0.60,-0.50)\) & \((-0.200,0.450,-0.550)\) & \((2,3)\) \\
\hline
\end{tabular}
\end{table}

Table \ref{tab:simulation-results} reports 30 independent replications for each \((\text{model},n)\) pair.  The column ``correct'' records the number of replications in which the selected order equals the true order.  The last column gives the empirical selection frequencies.

\begin{table}[H]
\centering
\caption{Order selection frequencies for the GPH--HR--GIC selector with growing candidate rectangles.}
\label{tab:simulation-results}
\small
\begin{tabular}{c r c c c c c}
\hline
Model & \(n\) & \((P_n,Q_n)\) & \((p_0,q_0)\) & correct & rate & selected orders \\
\hline
M1 & 4096 & \((10,10)\) & \((3,4)\) & 30/30 & 1.000 & \((3,4):30\) \\
M1 & 8192 & \((11,11)\) & \((3,4)\) & 30/30 & 1.000 & \((3,4):30\) \\
M1 & 16384 & \((12,12)\) & \((3,4)\) & 30/30 & 1.000 & \((3,4):30\) \\
M2 & 4096 & \((10,10)\) & \((4,5)\) & 30/30 & 1.000 & \((4,5):30\) \\
M2 & 8192 & \((11,11)\) & \((4,5)\) & 30/30 & 1.000 & \((4,5):30\) \\
M2 & 16384 & \((12,12)\) & \((4,5)\) & 30/30 & 1.000 & \((4,5):30\) \\
M3 & 4096 & \((10,10)\) & \((2,3)\) & 29/30 & 0.967 & \((2,3):29;(3,4):1\) \\
M3 & 8192 & \((11,11)\) & \((2,3)\) & 30/30 & 1.000 & \((2,3):30\) \\
M3 & 16384 & \((12,12)\) & \((2,3)\) & 30/30 & 1.000 & \((2,3):30\) \\
\hline
\end{tabular}
\end{table}

The experiment follows the same operational order as the theory.  The candidate rectangle grows from \((10,10)\) to \((12,12)\), and the selected order concentrates at the true higher-order pair in all three designs.  The only error is a single one-step overfit for M3 at \(n=4096\).  No zero-order design is used in this experiment.

\section{Approximation theory}\label{sec:approximation}

The purpose of this section is to prove that the two feasible objects used by the selector, \(\widetilde X_t\) and \(\widetilde e_t\), are close to their ideal counterparts.  All empirical sums in the proof sections are taken over \(\mathcal I_n\).  For a finite sequence \((u_t)_{t\in\mathcal I_n}\), write
\begin{equation}
    \|u\|_N=\left(N^{-1}\sum_{t\in\mathcal I_n}u_t^2\right)^{1/2}.
\end{equation}

\subsection{Finite-section concentration}

We first record the concentration input needed to control high-order regressions and uniform empirical losses.
\begin{lemma}\label{lem:finite-section-concentration}
Suppose Assumption \ref{ass:truth} holds.  Let \(b_n\ge1\) be deterministic, and let \(W_{t,n}\in\R^{d_n}\) be any vector whose coordinates are chosen from finitely many lags of \((X_t)\) and \((\varepsilon_t)\), with \(d_n\le C b_n\).  If \(b_n\log n/N\to0\), then
\begin{equation}\label{eq:finite-section-covariance}
    \left\|
    N^{-1}\sum_t W_{t,n}W_{t,n}^\top-
    \E W_{t,n}W_{t,n}^\top
    \right\|_{\op}
    =O_p\left\{\left(\frac{b_n\log n}{N}\right)^{1/2}\right\}.
\end{equation}
Moreover, if there are constants \(0<c_\Sigma\le C_\Sigma<\infty\) such that
\begin{equation}\label{eq:population-eigenvalue-window}
    c_\Sigma
    \le
    \lambda_{\min}\{\E W_{t,n}W_{t,n}^{\top}\}
    \le
    \lambda_{\max}\{\E W_{t,n}W_{t,n}^{\top}\}
    \le C_\Sigma,
\end{equation}
then
\begin{equation}\label{eq:empirical-eigenvalue-window}
    \Pp\left\{
    \frac{c_\Sigma}{2}
    \le
    \lambda_{\min}\left(N^{-1}\sum_t W_{t,n}W_{t,n}^{\top}\right)
    \le
    \lambda_{\max}\left(N^{-1}\sum_t W_{t,n}W_{t,n}^{\top}\right)
    \le
    2C_\Sigma
    \right\}\longrightarrow1.
\end{equation}
Finally, for \(X_{t,h}=(X_{t-1},\ldots,X_{t-h})^\top\),
\begin{equation}\label{eq:finite-section-score}
    \left\|N^{-1}\sum_t X_{t,h}\varepsilon_t\right\|_2
    =O_p\left\{\left(\frac{h\log n}{n}\right)^{1/2}\right\}
\end{equation}
whenever \(h\log n/n\to0\).
\end{lemma}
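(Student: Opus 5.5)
The plan is to reduce all three claims to entrywise concentration of lagged cross-products of the linear process, and then pass from entrywise bounds to operator-norm bounds with a crude Frobenius estimate.

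\emph{Step 1: a representation for the cross-products.} Every coordinate of \(W_{t,n}\) is a lag of \(X\) or of \(\varepsilon\). By the causal representation \eqref{eq:causal-representation}, each coordinate is therefore a linear form \(\sum_\ell c_\ell\varepsilon_{t-s-\ell}\) whose coefficients decay geometrically. A generic entry of the matrix in \eqref{eq:finite-section-covariance} is
\[
N^{-1}\sum_t\bigl(X_{t-j}X_{t-k}-\E X_{t-j}X_{t-k}\bigr),
\]
or the analogue with \(\varepsilon\) in place of one or both factors. Such an entry is a centred quadratic form \(\varepsilon^\top A\varepsilon-\E\varepsilon^\top A\varepsilon\) in the independent sub-Gaussian innovations. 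The coefficient array is truncated at lag \(L_n\asymp\log n\), and the geometric tail of \eqref{eq:causal-representation} makes the truncation remainder \(O_p(n^{-2})\) uniformly in the indices.

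\emph{Step 2: entrywise bound.} I will bound the matrix \(A=A_{jk}\) using the exponential summability in \eqref{eq:causal-representation}. It satisfies \(\|A\|_F^2\lesssim N^{-1}\) and \(\|A\|_{\op}\lesssim N^{-1}\), because its entries are products of absolutely summable Toeplitz-type kernels. The Hanson--Wright inequality then gives
\[
\Pp\bigl(|\varepsilon^\top A\varepsilon-\E\varepsilon^\top A\varepsilon|>x\bigr)\le 2\exp\{-cN\min(x^2,x)\}
\]
with constants uniform in \((j,k)\). A union bound over the at most \(C b_n^2\le n^{2}\) index pairs, taking \(x=C(\log n/N)^{1/2}\), shows that
\[
\max_{j,k}\bigl|\widehat\Sigma_{jk}-\Sigma_{jk}\bigr|=O_p\bigl((\log n/N)^{1/2}\bigr),
\]
where \(\widehat\Sigma=N^{-1}\sum_t W_{t,n}W_{t,n}^\top\) and \(\Sigma=\E W_{t,n}W_{t,n}^\top\).

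\emph{Step 3: from entrywise to operator norm.} This is the step I expect to be the main obstacle. The Frobenius route gives
\[
\|\widehat\Sigma-\Sigma\|_{\op}\le d_n\max_{j,k}\bigl|\widehat\Sigma_{jk}-\Sigma_{jk}\bigr|\lesssim b_n(\log n/N)^{1/2},
\]
which is worse than \eqref{eq:finite-section-covariance}. To reach the sharper rate \((b_n\log n/N)^{1/2}\) I will use an \(\epsilon\)-net argument on the unit sphere \(S^{d_n-1}\) instead. For fixed unit \(u\), the quantity \(u^\top(\widehat\Sigma-\Sigma)u\) is again a centred quadratic form \(\varepsilon^\top A_u\varepsilon\), and because the coordinates are lags of a short-memory process, \(\|A_u\|_{\op}\) and \(N\|A_u\|_F^2\) stay bounded uniformly in \(u\). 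Hanson--Wright at level \(x=C\{(b_n\log n)/N\}^{1/2}\) combined with a \(1/4\)-net of cardinality \(9^{d_n}\) gives
\[
\exp\{Cb_n-cNx^2\}\to0,
\]
using \(b_n\log n/N\to0\) so that \(x\le1\) places us in the sub-Gaussian regime. The standard net-to-sphere comparison then yields \eqref{eq:finite-section-covariance}.

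\emph{Step 4: eigenvalue window.} Weyl's inequality gives \(|\lambda_{\min/\max}(\widehat\Sigma)-\lambda_{\min/\max}(\Sigma)|\le\|\widehat\Sigma-\Sigma\|_{\op}\), and the right side is \(o_p(1)\), eventually below \(c_\Sigma/2\). This yields \eqref{eq:empirical-eigenvalue-window}.

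\emph{Step 5: the score bound.} The vector \(N^{-1}\sum_t X_{t,h}\varepsilon_t\) has mean zero, since \(X_{t-j}\) is independent of \(\varepsilon_t\) for \(j\ge1\). Its squared norm \(\sum_{j\le h}(N^{-1}\sum_tX_{t-j}\varepsilon_t)^2\) has expectation \(h\,\E X_0^2\sigma_\varepsilon^2/N\), because the summands are martingale differences. Markov's inequality then gives the bound \(O_p((h/N)^{1/2})\), which is within the claimed \((h\log n/n)^{1/2}\) since \(N\asymp n\). Alternatively, the same bound follows as a special case of Step 3 applied to the stacked vector \((X_{t,h}^\top,\varepsilon_t)^\top\) with \(b_n=h\), by reading off the off-diagonal block.
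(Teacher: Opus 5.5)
Your covariance argument has the same architecture as the paper's: a $1/4$-net of cardinality $9^{d_n}$, a one-direction exponential inequality at rate $N$, a union bound at $x\asymp(b_n\log n/N)^{1/2}$, then Weyl. The only change is that you use Hanson--Wright where the paper appeals to Bernstein inequalities for weakly dependent series. The step that fails is your claim in Step 3 that $\|A_u\|_{\op}\lesssim N^{-1}$ and $\|A_u\|_F^2\lesssim N^{-1}$ uniformly over unit $u$.

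Write $A_u=N^{-1}G_u^\top G_u$, where $G_u$ maps $\varepsilon$ to $(u^\top W_{t,n})_{t\in\mathcal I_n}$. Then $G_u$ is a convolution, and its norm is the supremum of its symbol, so you only get $\|G_u\|_{\op}^2\lesssim\|u\|_1^2\le d_n$. This bound is attained in the growing consecutive-lag case the lemma is used for ($X_{t,h}$, $V^{\max}_{t,n}$). Take $X_t=\varepsilon_t$ Gaussian ($p_0=q_0=0$), $W_{t,n}=X_{t,h}$ and $u=h^{-1/2}(1,\ldots,1)^\top$. Then $u^\top W_{t,n}=h^{-1/2}\sum_{j\le h}\varepsilon_{t-j}$ has autocorrelation $(1-|k|/h)_+$, so $\|A_u\|_{\op}\asymp h/N$, $\|A_u\|_F^2\asymp h/N$, and $\operatorname{Var}(u^\top\widehat\Sigma u)\asymp h/N$.

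The honest uniform bound is therefore $\exp\{-cN\min(x^2,x)/d_n\}$. Against $9^{d_n}$ net points it forces $x\gtrsim d_nN^{-1/2}$, which is exactly the Frobenius rate you set out to beat. No union bound over a sphere net with a $u$-uniform tail can reach $(d_n\log n/N)^{1/2}$ once $d_n\gg\log n$, because this one direction already fluctuates at scale $(d_n/N)^{1/2}$.

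Following the paper does not rescue this. Its inequality $\Pp(|u^\top A_nu|>x)\le C_0\exp\{-c_0N\min(x^2,x)\}$ rests on $Q_t(u)$ being geometrically stable uniformly in $u$. Such a tail would force $\operatorname{Var}(u^\top A_nu)\lesssim N^{-1}$, which the same example contradicts: there the dependence of $Q_t(u)$ extends over $h$ lags.

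The missing idea is to use the block-Toeplitz structure, so that the union bound runs over frequencies rather than over the sphere.

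\begin{itemize}
\item For the consecutive-lag vectors actually used, $\widehat\Sigma-\Sigma$ equals the block-Toeplitz matrix of $\hat\Gamma(k)-\Gamma(k)$, $|k|<d_n$, for the stationary pair $U_t=(X_t,\varepsilon_t)^\top$. This holds up to an $O(d_n/N)$ bias and edge terms involving $O(d_n)$ time points, which are of smaller order.
\item Its operator norm is then at most $\sup_\lambda\|\sum_{|k|<d_n}\{\hat\Gamma(k)-\Gamma(k)\}e^{ik\lambda}\|_{\op}$ plus these corrections.
\item For fixed $\lambda$, each entry is a quadratic form $\varepsilon^\top B_\lambda\varepsilon$ with $\|B_\lambda\|_{\op}\lesssim d_n/N$ and $\|B_\lambda\|_F^2\lesssim d_n/N$. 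Hanson--Wright at $x=L(d_n\log n/N)^{1/2}$ gives a tail $n^{-cL^2}$; this uses $d_n\log n/N\to0$ to stay in the sub-Gaussian regime.
\item A grid of $O(d_n)$ frequencies, together with Bernstein's derivative inequality for trigonometric polynomials of degree $<d_n$, controls the supremum.
\end{itemize}

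Your Step 2 is fine but unused, and Step 4 goes through once Step 3 is repaired. Your main argument in Step 5 is correct and cleaner than the paper's coordinatewise Bernstein-plus-union bound. The martingale-difference identity $\E\|N^{-1}\sum_tX_{t,h}\varepsilon_t\|_2^2=h\,\E X_0^2\,\sigma_\varepsilon^2/N$ with Markov gives $O_p((h/N)^{1/2})$ without the logarithm. Only your alternative route to Step 5 through Step 3 inherits the gap.
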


\begin{proof}
Put
\begin{equation}
    A_n=N^{-1}\sum_t W_{t,n}W_{t,n}^{\top}-\E W_{t,n}W_{t,n}^{\top}.
\end{equation}
By \eqref{eq:causal-representation}, each coordinate of \(W_{t,n}\) is a geometrically stable linear process driven by independent sub-Gaussian innovations.  For a unit vector \(u\in\R^{d_n}\), define
\[
    Q_t(u)=(u^\top W_{t,n})^2-\E (u^\top W_{t,n})^2 .
\]
The process \((Q_t(u))\) is centered, sub-exponential, and geometrically stable, uniformly over all \(u\) satisfying \(\|u\|_2=1\).  Bernstein-type inequalities for weakly dependent and physical-dependence time series \cite{Wu2005,MerlevedePeligradRio2011,BasuMichailidis2015,ZhangWu2017} therefore give constants \(c_0,C_0>0\), independent of \(n\), such that for every \(u\in\R^{d_n}\) with \(\|u\|_2=1\) and every \(x>0\),
\begin{equation}\label{eq:one-direction-bernstein}
    \Pp\left(
    \left|u^{\top}A_nu\right|>x
    \right)
    \le
    C_0\exp\{-c_0N\min(x^2,x)\}.
\end{equation}
Let \({\mathcal N}_n\) be a \(1/4\)-net of the unit sphere
\[
    \{u\in\R^{d_n}:\|u\|_2=1\}.
\]
Then
\begin{equation}
    |{\mathcal N}_n|\le 9^{d_n}
    \le \exp(C_1b_n),
\end{equation}
and the standard net inequality for symmetric matrices gives
\begin{equation}
    \|A_n\|_{\op}
    \le 2\max_{u\in {\mathcal N}_n}|u^{\top}A_nu|.
\end{equation}
Set
\begin{equation}\label{eq:cov-xn-choice}
    x_n=L\left(\frac{b_n\log n}{N}\right)^{1/2}.
\end{equation}
Since \(b_n\log n/N\to0\), \(x_n=o(1)\).  Combining \eqref{eq:one-direction-bernstein}--\eqref{eq:cov-xn-choice}, for all large \(n\),
\begin{align}
    \Pp\{\|A_n\|_{\op}>2x_n\}
    &\le
    C_0|{\mathcal N}_n|\exp(-c_0Nx_n^2)  \\
    &\le
    C_0\exp\{C_1b_n-c_0L^2b_n\log n\}.
\end{align}
Choosing \(L\) fixed and large enough yields
\begin{equation}\label{eq:cov-op-rate-proof}
    \|A_n\|_{\op}
    =O_p\left\{\left(\frac{b_n\log n}{N}\right)^{1/2}\right\},
\end{equation}
which is \eqref{eq:finite-section-covariance}.

Assume \eqref{eq:population-eigenvalue-window}.  On the event \(\|A_n\|_{\op}\le c_\Sigma/2\), Weyl's inequality gives
\begin{equation}
    c_\Sigma/2
    \le
    \lambda_{\min}\left(N^{-1}\sum_t W_{t,n}W_{t,n}^{\top}\right)
    \le
    \lambda_{\max}\left(N^{-1}\sum_t W_{t,n}W_{t,n}^{\top}\right)
    \le C_\Sigma+c_\Sigma/2
    \le 2C_\Sigma.
\end{equation}
Since \(\Pp\{\|A_n\|_{\op}\le c_\Sigma/2\}\to1\) by \eqref{eq:cov-op-rate-proof}, \eqref{eq:empirical-eigenvalue-window} follows.

It remains to prove \eqref{eq:finite-section-score}.  For \(1\le j\le h\), set
\begin{equation}
    S_{n,j}=N^{-1}\sum_t X_{t-j}\varepsilon_t.
\end{equation}
Since \(X_{t-j}\) is measurable with respect to \(\sigma(\varepsilon_s:s\le t-j)\) and is independent of \(\varepsilon_t\), the products \(X_{t-j}\varepsilon_t\) are centered sub-exponential stable time-series variables.  The same Bernstein inequality yields
\begin{equation}
    \Pp(|S_{n,j}|>x)
    \le C_0\exp\{-c_0N\min(x^2,x)\}.
\end{equation}
With
\begin{equation}
    y_n=L\left(\frac{\log n}{N}\right)^{1/2},
\end{equation}
we have, because \(h\log n/n\to0\) and \(N\asymp n\),
\begin{align}
    \Pp\left(\max_{1\le j\le h}|S_{n,j}|>y_n\right)
    &\le C_0h\exp(-c_0Ny_n^2) \\
    &\le C_0h n^{-c_0L^2}.
\end{align}
Taking \(L\) large gives
\begin{equation}
    \max_{1\le j\le h}|S_{n,j}|=O_p\left\{\left(\frac{\log n}{N}\right)^{1/2}\right\}.
\end{equation}
Therefore
\begin{equation}
    \left\|N^{-1}\sum_t X_{t,h}\varepsilon_t\right\|_2
    =\left(\sum_{j=1}^{h}S_{n,j}^2\right)^{1/2}
    \le h^{1/2}\max_{1\le j\le h}|S_{n,j}|
    =O_p\left\{\left(\frac{h\log n}{n}\right)^{1/2}\right\}.
\end{equation}
This proves \eqref{eq:finite-section-score}.
\end{proof}

\subsection{Fractional-filtering perturbation}

The next lemma controls the difference between the feasible filtered series \(\widetilde X_t\) and the infeasible short-memory series \(X_t\).
\begin{lemma}\label{lem:filtering-perturbation}
Under Assumption \ref{ass:truth} and Lemma \ref{lem:gph-rate},
\begin{equation}\label{eq:filtering-perturbation}
    \|\widetilde X-X\|_N=O_p(a_{d,n}\log n).
\end{equation}
\end{lemma}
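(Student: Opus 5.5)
We write \(\delta=\widetilde d-d_0\) and factor the feasible filter as
\[
\widetilde X_t=(1-B)^{\widetilde d}Y_t=(1-B)^{\delta}(1-B)^{d_0}Y_t=(1-B)^{\delta}X_t ,
\]
interpreting all filters as truncated at the sample start, which the trimming in \(\mathcal I_n\) is designed to absorb. Then
\[
\widetilde X_t-X_t=\{(1-B)^{\delta}-1\}X_t=\sum_{k\ge1}\pi_k(\delta)X_{t-k},
\qquad
(1-z)^{\delta}=\sum_{k\ge0}\pi_k(\delta)z^k .
\]
The plan is to bound the coefficients \(\pi_k(\delta)\) linearly in \(\delta\) and then sum the resulting filter applied to the short-memory process \(X\). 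Nothing here depends on how \(\widetilde d\) was constructed. We only use that \(|\delta|=O_p(a_{d,n})\), from Lemma \ref{lem:gph-rate}, and that \(a_{d,n}\to0\). So we may work on the event \(\{|\delta|\le \bar\delta\}\) for a small fixed \(\bar\delta\), whose probability tends to one.

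\textbf{Step 1: coefficient bound.} We have \(\pi_k(\delta)=\Gamma(k-\delta)/\{\Gamma(-\delta)\Gamma(k+1)\}\) and \(1/\Gamma(-\delta)=-\delta\,/\Gamma(1-\delta)\). Hence \(|\pi_k(\delta)|\le C|\delta|\,k^{-1-\delta}\le C|\delta|\,k^{-1+\bar\delta}\), uniformly for \(|\delta|\le\bar\delta\). An elementary alternative is the product form
\[
\pi_k(\delta)=-\frac{\delta}{k}\prod_{j=1}^{k-1}\Bigl(1-\frac{1+\delta}{j}\Bigr),
\]
which gives the same bound. The difficulty is that \(\sum_k|\pi_k|\) diverges when \(\delta<0\), and is only borderline otherwise. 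This is where the \(\log n\) comes from: the truncated sum over \(1\le k\le t\le n\) gives \(\sum_{k\le n}|\pi_k(\delta)|\le C|\delta|\,n^{\bar\delta}\log n\). That is not good enough, so Step 2 must exploit the second-moment structure of \(X\) rather than absolute summability.

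\textbf{Step 2: \(L^2\) bound via the spectrum.} We bound \(\E\|\widetilde X-X\|_N^2\) uniformly over the parameter \(\delta\), treating \(\delta\) as deterministic first. For fixed \(\delta\), the truncated filtered difference \(D_t(\delta)=\sum_{k=1}^{t-1}\pi_k(\delta)X_{t-k}\) satisfies
\[
\E D_t(\delta)^2\le 2\pi\sup_\lambda f_X(\lambda)\sum_{k<t}\pi_k(\delta)^2
\le C\delta^2\sum_{k\ge1}k^{-2+2\bar\delta}\le C\delta^2 .
\]
This uses that \(f_X\) is bounded, by the causal representation \eqref{eq:causal-representation}. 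So the deterministic-\(\delta\) rate is \(O(|\delta|)\), even without a logarithm.

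To pass to random \(\delta\), we expand around \(0\). Write \(\pi_k(\delta)=\delta\,\pi_k'(0)+R_k(\delta)\), with \(\pi_k'(0)=-1/k\) and \(|R_k(\delta)|\le C\delta^2 k^{-1+\bar\delta}\log k\). Then
\[
\widetilde X_t-X_t=-\delta\sum_{k<t}k^{-1}X_{t-k}+\sum_{k<t}R_k(\delta)X_{t-k}.
\]
The first sum does not depend on \(\delta\). Its empirical \(\|\cdot\|_N\) norm is \(O_p(\sqrt{\log n})\), since its variance is bounded by \(\sup f_X\cdot\sum_{k}k^{-2}\), or at worst by \(O(\log n)\) via the spectral form \(|\log(1-e^{-i\lambda})|^2\) integrated against bounded \(f_X\). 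For the remainder we use a crude bound. By Cauchy--Schwarz,
\[
\max_{t}\Bigl|\sum_{k<t}R_kX_{t-k}\Bigr|\le \Bigl(\sum_k R_k^2\Bigr)^{1/2}\Bigl(\sum_{s\le n}X_s^2\Bigr)^{1/2}=O_p(\delta^2 n^{1/2}).
\]
This is too big unless we are more careful. Instead we use a chaining or Lipschitz-in-\(\delta\) argument over a grid of \(\delta\) in \([-\bar\delta,\bar\delta]\) with mesh \(n^{-2}\). The Step 2 bound holds at each grid point with sub-Gaussian/sub-exponential tails, so a union over \(O(n^2)\) points costs a \(\log n\) factor. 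Between grid points, \(\partial_\delta\pi_k\) is polynomially bounded, so the increments are negligible. This yields
\[
\sup_{|\delta|\le\bar\delta}\frac{\|D(\delta)\|_N}{|\delta|}=O_p(\log n),
\]
hence \(\|\widetilde X-X\|_N=O_p(a_{d,n}\log n)\).

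\textbf{Main obstacle.} The hard step is the uniformity in the random \(\delta\). For a deterministic \(\delta\) the bound is immediate, but \(\delta\) is data-dependent and correlated with \(X\). I would handle it by the grid/union argument just described. Concretely: apply Bernstein-type concentration as in Lemma \ref{lem:finite-section-concentration} to the quadratic form \(N^{-1}\sum_t D_t(\delta)^2\) at each grid point, then interpolate. The logarithmic loss in the statement leaves exactly the room needed for this union bound, and also for any borderline summability in \(\sum_k \pi_k^2\) near the edge of the parameter range.
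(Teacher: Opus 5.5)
Your argument is correct in outline, but it takes a different route from the paper. The paper exploits no cancellation in \(\sum_\ell\kappa_\ell X_{t-\ell}\). It uses a deterministic envelope \(\sup_{\delta}|\kappa_\ell(\delta)|\le C\ell^{-1}\), so that \(|U_t(\delta)|\le C\sum_{\ell\le n}\ell^{-1}|X_{t-\ell}|\) holds for all \(\delta\) at once. Minkowski and Markov then give \(\sup_\delta\|U(\delta)\|_N=O_p(\log n)\). Uniformity in the random \(\widetilde d\) thus comes for free, and the \(\log n\) is just the harmonic sum.

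Your objection to this \(\ell^1\) route is legitimate, and it applies to the paper's bound \eqref{eq:kappa-bound} as written. For \(\delta<0\) the weights are of exact order \(\ell^{-1+|\delta|}\), so the \(C\ell^{-1}\) envelope cannot hold uniformly in \(\ell\le n\) on a fixed neighbourhood \(|\delta|\le\Delta_0\). It does hold on \(\{|\delta_n|\le 1/\log n\}\), where \(\ell^{|\delta_n|}\le e\). That event has probability tending to one once \(a_{d,n}\log n\to0\), which Assumption \ref{ass:hn-growth} supplies. So shrinking the neighbourhood would have rescued the cheap route.

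What your route buys is that it works on a fixed \(|\delta|\le\bar\delta<1/2\) and needs only \(a_{d,n}\to0\). It also gives a rate at least as good, in fact \(a_{d,n}\sqrt{\log n}\). The price is a genuine uniformity argument.

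Three technical points on that argument.

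\emph{Concentration tool.} Lemma \ref{lem:finite-section-concentration} is not the right tool, since \(D_t(\delta)\) involves up to \(n\) lags with polynomially decaying weights. Use Hanson--Wright for the quadratic form \(\|D(\delta)\|_N^2\) in the independent sub-Gaussian innovations. It gives \(\Pp\{\|D(\delta)\|_N^2>C\delta^2(1+x)\}\le e^{-cx}\) for \(x\ge1\), and hence exactly a \(\log n\) cost for the union over \(O(n^2)\) grid points. If you want the ratio \(\|D(\delta)\|_N/|\delta|\) uniformly near \(\delta=0\), interpolate \(D(\delta)/\delta\) rather than \(D(\delta)\).

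\emph{The grid can be avoided.} Write \((1-z)^\delta-1=\sum_{m\ge1}(-\delta)^m\{-\log(1-z)\}^m/m!\). This gives \(D(\delta)=\sum_{m\ge1}(-\delta)^mL_mX\), with \(\delta\)-free truncated filters \(L_m\) whose squared coefficient sums are \(O(4^m)\) by Parseval. Hence \(\E\sup_{0<|\delta|\le\bar\delta}\|D(\delta)\|_N/|\delta|\le\sum_{m\ge1}\bar\delta^{m-1}\E\|L_mX\|_N<\infty\) for \(\bar\delta<1/2\). This yields even \(O_p(a_{d,n})\), with no logarithm and no concentration at all.

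\emph{Product formula.} It should read \(\pi_k(\delta)=-\frac{\delta}{k}\prod_{j=1}^{k-1}(1-\delta/j)\); yours carries a spurious factor \(-\delta\) from \(j=1\). The Gamma-function form you actually use is correct, so nothing downstream is affected.
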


\begin{proof}
Put \(\delta_n=\widetilde d-d_0\).  On the event \(|\delta_n|\le\Delta_0\), with \(\Delta_0>0\) fixed sufficiently small,
\begin{equation}\label{eq:fractional-ratio}
    \widetilde X_t-X_t=\bigl\{(1-B)^{\delta_n}-1\bigr\}X_t
    =\delta_n\sum_{\ell=1}^{t-1}\kappa_\ell(\bar\delta_n)X_{t-\ell},
\end{equation}
where \(|\bar\delta_n|\le\Delta_0\) and
\begin{equation}\label{eq:kappa-bound}
    \sup_{|\delta|\le\Delta_0}|\kappa_\ell(\delta)|\le C\ell^{-1},
    \qquad 1\le \ell\le n.
\end{equation}
For \(|\delta|\le\Delta_0\), define
\begin{equation}
    U_t(\delta)=\sum_{\ell=1}^{t-1}\kappa_\ell(\delta)X_{t-\ell}.
\end{equation}
By \eqref{eq:kappa-bound},
\begin{equation}
    |U_t(\delta)|
    \le
    C\sum_{\ell=1}^{n}\ell^{-1}|X_{t-\ell}|,
    \qquad |\delta|\le\Delta_0 .
\end{equation}
Since \((X_t)\) is stationary with \(\sup_t\E X_t^2<\infty\), Cauchy's inequality gives
\begin{align}
    \E\left[\sup_{|\delta|\le\Delta_0}\|U(\delta)\|_N^2\right]
    &\le
    C N^{-1}\sum_{t\in\mathcal I_n}
    \E\left(\sum_{\ell=1}^{n}\ell^{-1}|X_{t-\ell}|\right)^2 \\
    &\le
    C\left(\sum_{\ell=1}^{n}\ell^{-1}\right)^2
    =O(\log^2 n).
\end{align}
Hence, by Markov's inequality,
\begin{equation}\label{eq:filtering-uniform-bound}
    \sup_{|\delta|\le\Delta_0}\left\|
    \sum_{\ell=1}^{t-1}\kappa_\ell(\delta)X_{t-\ell}
    \right\|_N
    =O_p(\log n).
\end{equation}
Combining \eqref{eq:fractional-ratio}, \eqref{eq:filtering-uniform-bound}, and \(|\delta_n|=O_p(a_{d,n})\) gives \eqref{eq:filtering-perturbation}.
\end{proof}

\subsection{Hannan--Rissanen residual approximation}

This subsection proves that the residuals used in the feasible ARMA-type regressions are close to the unobserved innovations.  The only extra ingredient is the truncation error of the infinite autoregression in \eqref{eq:infinite-ar-representation}.  For each integer \(h\ge1\), set
\begin{equation}\label{eq:tau-eta-def}
    \tau_{h,t}=\sum_{\ell>h}\varphi_{0,\ell}X_{t-\ell},
    \qquad
    \eta_h=(\E\tau_{h,t}^2)^{1/2}.
\end{equation}
Since \(\sum_{\ell\ge1}|\varphi_{0,\ell}|R^\ell<\infty\) for some \(R>1\) and \(\E X_t^2<\infty\), Minkowski's inequality gives, for some constants \(C<\infty\) and \(R_1>1\),
\begin{equation}
    \eta_h
    \le (\E X_t^2)^{1/2}\sum_{\ell>h}|\varphi_{0,\ell}|
    \le C R_1^{-h}.
\end{equation}
\begin{assumption}\label{ass:hn-growth}
The sequence \(h_n\) satisfies
\begin{equation}\label{eq:hn-growth}
    h_na_{d,n}\log n\to0,
    \qquad
    h_n^{1/2}\eta_{h_n}\to0,
    \qquad
    h_n^2\log n/n\to0.
\end{equation}
\end{assumption}
Define the residual-approximation scale
\begin{equation}\label{eq:re-rate}
    r_{e,n}
    =h_n^{1/2}a_{d,n}\log n+
    \eta_{h_n}+
    (h_n\log n/n)^{1/2}.
\end{equation}
\begin{lemma}\label{lem:hr-residual-approx}
Under Assumption \ref{ass:truth}, Lemma \ref{lem:gph-rate}, and Assumption \ref{ass:hn-growth},
\begin{equation}\label{eq:hr-residual-approx}
    \|\widetilde e-\varepsilon\|_N=O_p(r_{e,n}),
    \qquad
    \|\widehat\varphi\|_1=O_p(1).
\end{equation}
\end{lemma}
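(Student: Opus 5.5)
We compare the feasible least-squares fit with the infeasible truncated autoregression on the true short-memory series. Write \(\varphi_{0,h}=(\varphi_{0,1},\ldots,\varphi_{0,h_n})^\top\). From \eqref{eq:infinite-ar-representation},
\[
X_t=X_{t,h_n}^\top\varphi_{0,h}+\tau_{h_n,t}+\varepsilon_t .
\]
Put \(D_t=\widetilde X_t-X_t\) and \(D_{t,h}=\widetilde X_{t,h_n}-X_{t,h_n}\). Then
\[
\widetilde e_t-\varepsilon_t
=D_t-D_{t,h}^\top\widehat\varphi+\tau_{h_n,t}-X_{t,h_n}^\top(\widehat\varphi-\varphi_{0,h}).
\]
The first three terms are handled directly. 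By Lemma \ref{lem:filtering-perturbation}, \(\|D\|_N=O_p(a_{d,n}\log n)\). By Markov's inequality, \(\|\tau_{h_n}\|_N=O_p(\eta_{h_n})\). For the lagged filtering error, Cauchy--Schwarz gives
\[
\|D_{h}^\top\widehat\varphi\|_N\le \|\widehat\varphi\|_2\,\bigl(\textstyle\sum_{j\le h_n}\|D_{\cdot-j}\|_N^2\bigr)^{1/2}=O_p(h_n^{1/2}a_{d,n}\log n)\,\|\widehat\varphi\|_2 .
\]
The shifts only change endpoint observations, and the bound \eqref{eq:filtering-uniform-bound} is uniform in \(t\). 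So everything reduces to two facts: \(\|\widehat\varphi\|_2=O_p(1)\), and \(\|X_{h}^\top(\widehat\varphi-\varphi_{0,h})\|_N=O_p(r_{e,n})\).

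For the coefficient error we use the normal equations. Let \(\widetilde\Gamma=N^{-1}\sum_t\widetilde X_{t,h_n}\widetilde X_{t,h_n}^\top\) and \(\Gamma=N^{-1}\sum_tX_{t,h_n}X_{t,h_n}^\top\).

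\textbf{Step 1: eigenvalues of \(\Gamma\).} The population covariance \(\E X_{t,h}X_{t,h}^\top\) is a Toeplitz matrix. Its eigenvalues lie in \([2\pi\min f_X,2\pi\max f_X]\), which is a fixed window because \(f_X\) is bounded above and away from zero under Assumption \ref{ass:truth}(ii). Lemma \ref{lem:finite-section-concentration} with \(b_n=h_n\) then shows that \(\Gamma\) has eigenvalues in \([c/2,2C]\) with probability tending to one.

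\textbf{Step 2: eigenvalues of \(\widetilde\Gamma\).} We bound
\[
\|\widetilde\Gamma-\Gamma\|_{\op}\le 2\|\Gamma\|_{\op}^{1/2}\,\|N^{-1}\textstyle\sum D_{t,h}D_{t,h}^\top\|_{\op}^{1/2}+\|N^{-1}\textstyle\sum D_{t,h}D_{t,h}^\top\|_{\op}.
\]
The operator norm is at most the trace, which is \(O_p(h_n a_{d,n}^2\log^2n)\). This is \(o_p(1)\) by the first condition in \eqref{eq:hn-growth}, so \(\widetilde\Gamma\) has eigenvalues in a fixed window with probability tending to one.

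\textbf{Step 3: the score.} Write
\[
\widehat\varphi-\varphi_{0,h}=\widetilde\Gamma^{-1}N^{-1}\sum_t\widetilde X_{t,h_n}\bigl(\varepsilon_t+\tau_{h_n,t}+D_t-D_{t,h}^\top\varphi_{0,h}\bigr).
\]
We bound the pieces in \(\ell_2\):
\begin{itemize}
\item The term \(N^{-1}\sum X_{t,h}\varepsilon_t\) is \(O_p((h_n\log n/n)^{1/2})\) by \eqref{eq:finite-section-score}.
\item The \(\tau\)-term is \(O_p(h_n^{1/2}\eta_{h_n})\) by Cauchy--Schwarz coordinatewise.
\item The \(D\)-terms are at most \(\|\widetilde\Gamma\|_{\op}^{1/2}\,\|D-D_h^\top\varphi_{0,h}\|_N\). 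Since \(\|\varphi_0\|_1<\infty\), this is \(O_p(a_{d,n}\log n)\).
\item The cross term \(N^{-1}\sum D_{t,h}\varepsilon_t\) is \(O_p(h_n^{1/2}a_{d,n}\log n)\) by Cauchy--Schwarz.
\end{itemize}
Hence \(\|\widehat\varphi-\varphi_{0,h}\|_2=O_p(r_{e,n}+h_n^{1/2}\eta_{h_n})=o_p(1)\) by \eqref{eq:hn-growth}, and \(\|X_h^\top(\widehat\varphi-\varphi_{0,h})\|_N\le\|\Gamma\|_{\op}^{1/2}\|\widehat\varphi-\varphi_{0,h}\|_2\).

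\textbf{Main obstacle.} Two terms need care to match the stated rate \(r_{e,n}\) rather than \(r_{e,n}+h_n^{1/2}\eta_{h_n}\).

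First, the \(\tau\)-contribution must be sharpened. We do not bound the score \(N^{-1}\sum X_{t,h}\tau_t\) coordinatewise. Instead we use the projection form: \(X_h^\top\Gamma^{-1}N^{-1}\sum X_{t,h}\tau_t\) is the empirical projection of \(\tau\) onto the lag space, so its \(\|\cdot\|_N\)-norm is at most \(\|\tau\|_N=O_p(\eta_{h_n})\). The same projection argument gives the \(D\)-terms at rate \(a_{d,n}\log n\), and the mixed feasible/infeasible Gram discrepancy contributes \(o_p(1)\) times these rates.

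Second, the \(\ell_1\) bound needs more than the \(\ell_2\) bound: \(\|\widehat\varphi\|_1\le\|\varphi_0\|_1+h_n^{1/2}\|\widehat\varphi-\varphi_{0,h}\|_2\). So we need \(h_n^{1/2}\|\widehat\varphi-\varphi_{0,h}\|_2=O_p(1)\). The three contributions give:
\begin{itemize}
\item \(h_n^{1/2}(h_n\log n/n)^{1/2}=(h_n^2\log n/n)^{1/2}\to0\);
\item \(h_n\,a_{d,n}\log n\to0\);
\item \(h_n\eta_{h_n}\to0\), since \(\eta_h\) decays geometrically and \(h_n\to\infty\).
\end{itemize}
Verifying that each of these is \(o(1)\) under \eqref{eq:hn-growth} is the step I expect to require the most bookkeeping.
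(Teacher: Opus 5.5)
Your proposal is correct and follows the paper's proof essentially step for step: the same decomposition of $\widetilde e_t-\varepsilon_t$, the same normal-equation split into $\varepsilon$, $\tau$ and $D$ pieces, the same Gram perturbation via $N^{-1}\sum_t\|D_{t,h}\|_2^2=O_p(h_n a_{d,n}^2\log^2 n)$, and the same $\ell_1$ bound through $h_n^{1/2}\|\widehat\varphi-\varphi_{0,h}\|_2$. The one difference is that your ``main obstacle'' disappears with a one-line fix: the Cauchy--Schwarz bound you already used for the $D$-terms also gives $\|N^{-1}\sum_t\widetilde X_{t,h_n}\tau_{h_n,t}\|_2\le\lambda_{\max}(\widetilde\Gamma)^{1/2}\|\tau_{h_n}\|_N=O_p(\eta_{h_n})$, which is how the paper reaches the rate $r_{e,n}$ without a projection argument, and which also means the $\ell_1$ step needs only $h_n^{1/2}\eta_{h_n}\to0$ rather than your geometric-decay claim $h_n\eta_{h_n}\to0$.
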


\begin{proof}
Write \(h=h_n\).  Define
\begin{equation}
    X_{t,h}=(X_{t-1},\ldots,X_{t-h})^\top,
    \qquad
    \widetilde X_{t,h}=(\widetilde X_{t-1},\ldots,\widetilde X_{t-h})^\top,
\end{equation}
and let
\begin{equation}
    \varphi_{0,h}=(\varphi_{0,1},\ldots,\varphi_{0,h})^\top.
\end{equation}
By \eqref{eq:infinite-ar-representation} and \eqref{eq:tau-eta-def},
\begin{equation}\label{eq:true-high-ar}
    X_t=X_{t,h}^\top\varphi_{0,h}+\tau_{h,t}+\varepsilon_t.
\end{equation}
Moreover,
\begin{equation}\label{eq:tau-empirical-bound}
    \E\|\tau_h\|_N^2
    =N^{-1}\sum_{t\in\mathcal I_n}\E\tau_{h,t}^2
    =\eta_h^2,
    \qquad
    \|\tau_h\|_N=O_p(\eta_h).
\end{equation}

Define
\begin{equation}\label{eq:gram-defs}
    \widehat\Gamma_h=N^{-1}\sum_t X_{t,h}X_{t,h}^{\top},
    \qquad
    \widetilde\Gamma_h=N^{-1}\sum_t \widetilde X_{t,h}\widetilde X_{t,h}^{\top}.
\end{equation}
The population covariance of \(X_{t,h}\) has eigenvalues bounded away from zero and infinity uniformly in \(h\), because \((X_t)\) is causal and invertible with a spectral density bounded above and below on \([-\pi,\pi]\).  Applying Lemma \ref{lem:finite-section-concentration} with \(W_{t,n}=X_{t,h}\) and \(b_n=h\) gives
\begin{equation}
    \Pp\left\{
    0<c\le \lambda_{\min}(\widehat\Gamma_h)
    \le \lambda_{\max}(\widehat\Gamma_h)
    \le C
    \right\}\to1.
\end{equation}
The score part of Lemma \ref{lem:finite-section-concentration} gives
\begin{equation}
    \left\|N^{-1}\sum_t X_{t,h}\varepsilon_t\right\|_2
    =O_p\{(h\log n/n)^{1/2}\}.
\end{equation}

Put
\begin{equation}
    \Delta X_t=\widetilde X_t-X_t,
    \qquad
    \Delta X_{t,h}=\widetilde X_{t,h}-X_{t,h}.
\end{equation}
For \(0\le j\le h\), apply the argument of Lemma \ref{lem:filtering-perturbation} to the shifted sample \(\{t-j:t\in\mathcal I_n\}\).  Since \(h=o(n)\) by \eqref{eq:hn-growth}, this gives the uniform lag bound
\begin{equation}
    \max_{0\le j\le h}\|\Delta X_{\cdot-j}\|_N
    =O_p(a_{d,n}\log n).
\end{equation}
Consequently,
\begin{equation}
    \|\Delta X\|_N=O_p(a_{d,n}\log n),
    \qquad
    \left(N^{-1}\sum_t\|\Delta X_{t,h}\|_2^2\right)^{1/2}
    =O_p(h^{1/2}a_{d,n}\log n).
\end{equation}
Using \eqref{eq:gram-defs},
\begin{align}
    \|\widetilde\Gamma_h-\widehat\Gamma_h\|_{\op}
    &\le
    2\left\|N^{-1}\sum_t X_{t,h}\Delta X_{t,h}^{\top}\right\|_{\op}
    +\left\|N^{-1}\sum_t\Delta X_{t,h}\Delta X_{t,h}^{\top}\right\|_{\op} \\
    &\le
    2\lambda_{\max}(\widehat\Gamma_h)^{1/2}
    \left(N^{-1}\sum_t\|\Delta X_{t,h}\|_2^2\right)^{1/2}
    +N^{-1}\sum_t\|\Delta X_{t,h}\|_2^2 \\
    &=O_p(h^{1/2}a_{d,n}\log n+h a_{d,n}^2\log^2 n)
    =o_p(1),
\end{align}
where the last step follows from \eqref{eq:hn-growth}.  Therefore
\begin{equation}\label{eq:tilde-gram-inverse-bound}
    \|\widetilde\Gamma_h^{-1}\|_{\op}=O_p(1),
    \qquad
    \lambda_{\max}(\widetilde\Gamma_h)=O_p(1).
\end{equation}

The least-squares normal equation for \(\widehat\varphi\) is
\begin{equation}\label{eq:feasible-normal-equation-exact}
    \widehat\varphi-\varphi_{0,h}
    =\widetilde\Gamma_h^{-1}
    N^{-1}\sum_t \widetilde X_{t,h}
    \{\widetilde X_t-\widetilde X_{t,h}^\top\varphi_{0,h}\}.
\end{equation}
By \eqref{eq:true-high-ar},
\begin{equation}\label{eq:feasible-normal-residual-decomposition}
    \widetilde X_t-\widetilde X_{t,h}^\top\varphi_{0,h}
    =\varepsilon_t+	au_{h,t}
     +\Delta X_t-\Delta X_{t,h}^\top\varphi_{0,h}.
\end{equation}
We bound the four summands in \eqref{eq:feasible-normal-residual-decomposition}.  First,
\begin{align}
    \left\|N^{-1}\sum_t \widetilde X_{t,h}\varepsilon_t\right\|_2
    &\le
    \left\|N^{-1}\sum_t X_{t,h}\varepsilon_t\right\|_2
    +\left\|N^{-1}\sum_t \Delta X_{t,h}\varepsilon_t\right\|_2 \\
    &\le
    O_p\{(h\log n/n)^{1/2}\}
    +
    \left(N^{-1}\sum_t\|\Delta X_{t,h}\|_2^2\right)^{1/2}
    \left(N^{-1}\sum_t\varepsilon_t^2\right)^{1/2} \\
    &=O_p\{(h\log n/n)^{1/2}\}+O_p(h^{1/2}a_{d,n}\log n).
\end{align}
Second, by Cauchy's inequality, \eqref{eq:tau-empirical-bound}, and \eqref{eq:tilde-gram-inverse-bound},
\begin{equation}
    \left\|N^{-1}\sum_t \widetilde X_{t,h}\tau_{h,t}\right\|_2
    \le \lambda_{\max}(\widetilde\Gamma_h)^{1/2}\|\tau_h\|_N
    =O_p(\eta_h).
\end{equation}
Third,
\begin{equation}
    \left\|N^{-1}\sum_t \widetilde X_{t,h}\Delta X_t\right\|_2
    \le \lambda_{\max}(\widetilde\Gamma_h)^{1/2}\|\Delta X\|_N
    =O_p(a_{d,n}\log n).
\end{equation}
Finally, since \(\sum_{\ell\ge1}|\varphi_{0,\ell}|<\infty\),
\begin{equation}
    \|\varphi_{0,h}\|_1=O(1),
    \qquad
    \|\varphi_{0,h}\|_2=O(1).
\end{equation}
Therefore,
\begin{align}
    \left\|N^{-1}\sum_t \widetilde X_{t,h}\Delta X_{t,h}^\top\varphi_{0,h}\right\|_2
    &\le \lambda_{\max}(\widetilde\Gamma_h)^{1/2}
       \left(N^{-1}\sum_t(\Delta X_{t,h}^\top\varphi_{0,h})^2\right)^{1/2} \\
    &\le O_p(1)\,\|\varphi_{0,h}\|_2
       \left(N^{-1}\sum_t\|\Delta X_{t,h}\|_2^2\right)^{1/2} \\
    &=O_p(h^{1/2}a_{d,n}\log n).
\end{align}
Combining the normal equation \eqref{eq:feasible-normal-equation-exact}, the decomposition \eqref{eq:feasible-normal-residual-decomposition}, and the four bounds above yields
\begin{equation}
    \|\widehat\varphi-\varphi_{0,h}\|_2
    =O_p\left(h^{1/2}a_{d,n}\log n+
    \eta_h+(h\log n/n)^{1/2}\right)
    =O_p(r_{e,n}).
\end{equation}
Furthermore, by \eqref{eq:hn-growth},
\begin{align}
    \|\widehat\varphi\|_1
    &\le \|\varphi_{0,h}\|_1+h^{1/2}\|\widehat\varphi-\varphi_{0,h}\|_2 \\
    &=O_p(1)+O_p\{h a_{d,n}\log n+h^{1/2}\eta_h+h(\log n/n)^{1/2}\}
    =O_p(1).
\end{align}

It remains to compare the feasible residuals with \(\varepsilon_t\).  From \eqref{eq:hr-residual} and \eqref{eq:true-high-ar},
\begin{equation}\label{eq:residual-error-decomp}
    \widetilde e_t-\varepsilon_t
    =\Delta X_t
     -\widehat\varphi^\top\Delta X_{t,h}
     -(\widehat\varphi-\varphi_{0,h})^\top X_{t,h}
     +\tau_{h,t}.
\end{equation}
The four terms in \eqref{eq:residual-error-decomp} satisfy
\begin{align}
    \|\Delta X\|_N
    &=O_p(a_{d,n}\log n), \\
    \left\|\widehat\varphi^\top\Delta X_{\cdot,h}\right\|_N
    &\le \|\widehat\varphi\|_2
    \left(N^{-1}\sum_t\|\Delta X_{t,h}\|_2^2\right)^{1/2}
    =O_p(h^{1/2}a_{d,n}\log n), \\
    \left\|(\widehat\varphi-\varphi_{0,h})^\top X_{\cdot,h}\right\|_N
    &\le \lambda_{\max}(\widehat\Gamma_h)^{1/2}
    \|\widehat\varphi-\varphi_{0,h}\|_2
    =O_p(r_{e,n}), \\
    \|\tau_h\|_N&=O_p(\eta_h).
\end{align}
Together with the definition \eqref{eq:re-rate}, these bounds imply
\begin{equation}
    \|\widetilde e-\varepsilon\|_N=O_p(r_{e,n}).
\end{equation}
This proves \eqref{eq:hr-residual-approx}.
\end{proof}

\section{Proof of the main theorem}\label{sec:proof-main}

The proof compares the computable residual variances in Section \ref{sec:method-main} with their ideal ARMA counterparts.  For a candidate order \((p,q)\), define
\begin{equation}
    Z_t^0(p,q)=
    (X_{t-1},\ldots,X_{t-p},
    -\varepsilon_{t-1},\ldots,-\varepsilon_{t-q})^\top
\end{equation}
and
\begin{equation}
    L^0_{p,q}(\gamma)=\E\{X_t-Z_t^0(p,q)^\top\gamma\}^2,
    \qquad
    \sigma_{K,p,q}^{2,*}=\inf_{\gamma\in\mathcal K_{p,q}(K)}L^0_{p,q}(\gamma).
\end{equation}
Choose the compact radius \(K\) so that the zero-padded true coefficient vector is in \(\mathcal K_{p,q}(K)\) whenever \(p\ge p_0\) and \(q\ge q_0\).  Equivalently, it is enough to take
\begin{equation}
    K>
    \left(
    \sum_{j=1}^{p_0}\alpha_{0,j}^2+
    \sum_{k=1}^{q_0}\beta_{0,k}^2
    \right)^{1/2} .
\end{equation}
Then
\begin{equation}\label{eq:population-overfit}
    \sigma_{K,p,q}^{2,*}=\sigma_\varepsilon^2,
    \qquad
    p\ge p_0,
    \quad
    q\ge q_0.
\end{equation}
The underfitted candidate set is
\begin{equation}
    \mathcal U_n=
    \{(p,q):0\le p\le P_n,\ 0\le q\le Q_n,\ p<p_0\text{ or }q<q_0\}.
\end{equation}
Define
\begin{equation}
    \Delta_n=
    \begin{cases}
    \displaystyle
    \inf_{(p,q)\in\mathcal U_n}
    \left\{
    \log\sigma_{K,p,q}^{2,*}-\log\sigma_\varepsilon^2
    \right\}, & \mathcal U_n\ne\varnothing,\\[1.2em]
    +\infty, & \mathcal U_n=\varnothing.
    \end{cases}
\end{equation}
The uniform approximation scale is
\begin{equation}\label{eq:omega-n}
    \omega_n=
    \left(\frac{M_n\log n}{N}\right)^{1/2}
    +M_n^{1/2}r_{e,n}
    +M_n r_{e,n}^2 .
\end{equation}
The penalty sequence is required to satisfy
\begin{equation}\label{eq:sieve-penalty-window}
    \frac{\omega_n}{\pi_n}\longrightarrow0,
    \qquad
    \frac{M_n\pi_n}{\Delta_n}\longrightarrow0,
    \qquad
    \pi_n\longrightarrow0,
\end{equation}
where the second ratio is interpreted as zero when \(\mathcal U_n=\varnothing\).  Throughout the final comparison, the minimum over an empty set is interpreted as \(+\infty\).  Thus
\begin{equation}
    \omega_n\to0,
    \qquad
    \frac{M_n\log n}{N}\to0,
    \qquad
    \omega_n\ll \pi_n\ll \frac{\Delta_n}{M_n}.
\end{equation}

\subsection{Population separation and uniform residual-variance approximation}

The first two lemmas identify the population ARMA benchmark.
\begin{lemma}
Under Assumption \ref{ass:truth}, for every fixed finite pair \((p,q)\),
\begin{equation}\label{eq:s5-fixed-underfit-gap}
    p<p_0\text{ or }q<q_0
    \quad\Longrightarrow\quad
    \sigma_{K,p,q}^{2,*}>\sigma_\varepsilon^2.
\end{equation}
Consequently, for fixed finite \(P,Q\), if
\begin{equation}
    \mathcal U(P,Q)=
    \{(p,q):0\le p\le P,\ 0\le q\le Q,
    \ p<p_0\text{ or }q<q_0\}
    \ne\varnothing,
\end{equation}
then
\begin{equation}\label{eq:s5-fixed-rectangle-gap}
    \inf_{(p,q)\in\mathcal U(P,Q)}
    \left\{\log\sigma_{K,p,q}^{2,*}-\log\sigma_\varepsilon^2\right\}>0 .
\end{equation}
\end{lemma}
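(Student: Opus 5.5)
Fix a finite underfitted pair \((p,q)\), so \(p<p_0\) or \(q<q_0\). The plan is to show that the infimum defining \(\sigma_{K,p,q}^{2,*}\) is attained and that no \(\gamma\) achieves the value \(\sigma_\varepsilon^2\).

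\emph{Step 1 (attainment and the variance decomposition).} The function \(L^0_{p,q}\) is a continuous quadratic on the compact ball \(\mathcal K_{p,q}(K)\), so the infimum is attained at some \(\gamma^*=(a_1,\ldots,a_p,c_1,\ldots,c_q)\). Write
\[
X_t-Z_t^0(p,q)^\top\gamma=\varepsilon_t+\Bigl\{X_t-\varepsilon_t-\sum_{j=1}^p a_jX_{t-j}+\sum_{k=1}^q c_k\varepsilon_{t-k}\Bigr\}.
\]
By \eqref{eq:causal-representation} the bracket lies in \(\sigma(\varepsilon_s:s\le t-1)\), so it is uncorrelated with \(\varepsilon_t\). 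Hence \(L^0_{p,q}(\gamma)=\sigma_\varepsilon^2+\E D_t(\gamma)^2\), where \(D_t(\gamma)\) denotes the bracket. This gives \(\sigma_{K,p,q}^{2,*}\ge\sigma_\varepsilon^2\), with equality only if \(D_t(\gamma^*)=0\) almost surely.

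\emph{Step 2 (equality forces an ARMA(\(p,q\)) representation).} Suppose \(D_t(\gamma^*)=0\) a.s. Then
\[
A(B)X_t=C(B)\varepsilon_t,\qquad A(z)=1-\sum_{j\le p} a_jz^j,\qquad C(z)=1+\sum_{k\le q}c_kz^k.
\]
Substitute \(X_t=\Psi(B)\varepsilon_t\), where \(\Psi=\Theta_0/\Phi_0\) by \eqref{eq:causal-representation}. Since the \(\varepsilon_t\) are uncorrelated with positive variance, equating the coefficients of the two square-summable moving-average expansions gives the power-series identity \(A(z)\Theta_0(z)/\Phi_0(z)=C(z)\) on \(|z|<1\). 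Therefore \(A\Theta_0=C\Phi_0\) as polynomials. By coprimality (Assumption \ref{ass:truth}(iii)), \(\Phi_0\) divides \(A\) and \(\Theta_0\) divides \(C\). If \(p<p_0\), then \(\deg A<\deg\Phi_0=p_0\), using \(\alpha_{0,p_0}\ne0\) from (iv), and \(A\ne0\) since \(A(0)=1\); this is a contradiction. If \(q<q_0\), then \(\Theta_0\mid C\) with \(\deg C<q_0=\deg\Theta_0\) and \(C(0)=1\), again a contradiction. Thus \(\sigma_{K,p,q}^{2,*}>\sigma_\varepsilon^2\), which proves \eqref{eq:s5-fixed-underfit-gap}.

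\emph{Step 3 (fixed rectangle).} The set \(\mathcal U(P,Q)\) is finite and nonempty, so the infimum in \eqref{eq:s5-fixed-rectangle-gap} is a minimum of finitely many strictly positive numbers and is therefore positive.

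The main obstacle is the identification in Step 2, namely passing rigorously from the almost-sure identity between random variables to the polynomial identity. I would handle it by computing the covariance of both sides with \(\varepsilon_{t-\ell}\) for each \(\ell\ge0\). This yields the coefficient-by-coefficient equality of the two MA(\(\infty\)) expansions, which are absolutely summable by \eqref{eq:causal-representation}. The resulting identity of analytic functions on the unit disk then extends to polynomials after multiplying through by \(\Phi_0\).
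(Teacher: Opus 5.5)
Your proposal is correct and follows the paper's proof: the same decomposition \(L^0_{p,q}(\gamma)=\sigma_\varepsilon^2+\E D_t(\gamma)^2\) with attainment on the compact ball, reduction of equality to an ARMA\((p,q)\) identity, then coprimality together with the terminal-coefficient degree contradiction, and finally finiteness of \(\mathcal U(P,Q)\). The one difference is in deriving \(A\Theta_0=C\Phi_0\): the paper stays with finite lag polynomials, applying \(\Phi_0(B)\) and \(A(B)\) to the two ARMA equations to get \(\{A\Theta_0-C\Phi_0\}(B)\varepsilon_t=0\) and reading off the coefficients from the variance, whereas you match MA\((\infty)\) coefficients; also, with the paper's sign convention your \(C(z)\) should be \(1-\sum_{k\le q}c_kz^k\), which is harmless because only \(C(0)=1\) and \(\deg C\le q\) are used.
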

\begin{proof}
Fix \((p,q)\) with \(p<p_0\) or \(q<q_0\).  For \(\gamma\in\mathcal K_{p,q}(K)\), set
\begin{equation}
    R_t(\gamma)=X_t-Z_t^0(p,q)^\top\gamma,
    \qquad
    D_t(\gamma)=X_t-\varepsilon_t-Z_t^0(p,q)^\top\gamma .
\end{equation}
Since \(D_t(\gamma)\) is measurable with respect to
\(\mathcal F_{t-1}=\sigma(\varepsilon_{t-1},\varepsilon_{t-2},\ldots)\),
\begin{equation}
    R_t(\gamma)=\varepsilon_t+D_t(\gamma),
    \qquad
    \E\{\varepsilon_tD_t(\gamma)\}=0,
\end{equation}
and therefore
\begin{equation}\label{eq:s5-loss-decomposition}
    L_{p,q}^0(\gamma)=\sigma_\varepsilon^2+\E D_t(\gamma)^2
    \ge \sigma_\varepsilon^2.
\end{equation}
The minimum of \(L_{p,q}^0\) over \(\mathcal K_{p,q}(K)\) is attained, because the loss is continuous and the ball is compact.  If equality held in \eqref{eq:s5-loss-decomposition} at a minimizer \(\gamma_*\), then
\begin{equation}\label{eq:s5-dzero}
    \E D_t(\gamma_*)^2=0,
    \qquad
    R_t(\gamma_*)=\varepsilon_t\quad\text{in }L^2.
\end{equation}
Write
\begin{equation}
    \gamma_*=(a_1,\ldots,a_p,c_1,\ldots,c_q)^\top.
\end{equation}
Then \eqref{eq:s5-dzero} is equivalent to
\begin{equation}
    X_t-\sum_{j=1}^{p}a_jX_{t-j}+
    \sum_{k=1}^{q}c_k\varepsilon_{t-k}=\varepsilon_t .
\end{equation}
Define
\begin{equation}
    \Phi(z)=1-\sum_{j=1}^{p}a_jz^j,
    \qquad
    \Theta(z)=1-\sum_{k=1}^{q}c_kz^k .
\end{equation}
Then
\begin{equation}\label{eq:s5-alt-arma-rep}
    \Phi(B)X_t=\Theta(B)\varepsilon_t,
    \qquad
    \deg\Phi\le p,
    \quad
    \deg\Theta\le q,
    \quad
    \Phi(0)=\Theta(0)=1.
\end{equation}
Combining \eqref{eq:s5-alt-arma-rep} with \(\Phi_0(B)X_t=\Theta_0(B)\varepsilon_t\) gives
\begin{equation}
    \{\Phi(B)\Theta_0(B)-\Theta(B)\Phi_0(B)\}\varepsilon_t=0 .
\end{equation}
If \(A(z)=\Phi(z)\Theta_0(z)-\Theta(z)\Phi_0(z)=\sum_{\ell=0}^{m}A_\ell z^\ell\), then independence of \((\varepsilon_t)\) gives
\begin{equation}
    0=
    \E\left(\sum_{\ell=0}^{m}A_\ell\varepsilon_{t-\ell}\right)^2
    =\sigma_\varepsilon^2\sum_{\ell=0}^{m}A_\ell^2 .
\end{equation}
Hence \(A_\ell=0\) for all \(\ell\), namely
\begin{equation}
    \Phi(z)\Theta_0(z)=\Theta(z)\Phi_0(z).
\end{equation}
Since \(\Phi_0\) and \(\Theta_0\) are coprime,
\begin{equation}
    \Phi(z)=C(z)\Phi_0(z),
    \qquad
    \Theta(z)=C(z)\Theta_0(z),
    \qquad
    C(0)=1,
\end{equation}
for some polynomial \(C\).  The terminal coefficients in Assumption \ref{ass:truth} imply
\begin{equation}
    \deg\Phi\ge p_0,
    \qquad
    \deg\Theta\ge q_0,
\end{equation}
which contradicts \(\deg\Phi\le p\), \(\deg\Theta\le q\), and \(p<p_0\) or \(q<q_0\).  Thus \eqref{eq:s5-fixed-underfit-gap} holds.  Since \(\mathcal U(P,Q)\) is finite, \eqref{eq:s5-fixed-rectangle-gap} follows.
\end{proof}

\begin{lemma}\label{lem:pop-cov-bound}
Under Assumption \ref{ass:truth},
\begin{equation}\label{eq:population-gram-bound}
    \sup_{p\ge0,\,q\ge0}\lambda_{\max}(\Sigma_{p,q}^0)<\infty,
    \qquad
    \Sigma_{p,q}^0=\E\{Z_t^0(p,q)Z_t^0(p,q)^\top\}.
\end{equation}
\end{lemma}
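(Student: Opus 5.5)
The plan is to bound the quadratic form \(u^\top\Sigma_{p,q}^0u\) uniformly over unit vectors \(u\in\R^{p+q}\) and over all \(p,q\). We use the spectral representation of the joint stationary process \((X_t,\varepsilon_t)\). Split \(u=(a,c)\) with \(a\in\R^p\), \(c\in\R^q\). Then
\[
u^\top Z_t^0(p,q)=\sum_{j=1}^p a_jX_{t-j}-\sum_{k=1}^q c_k\varepsilon_{t-k},
\]
and by \((x+y)^2\le 2x^2+2y^2\) it suffices to bound the two blocks separately:
\[
u^\top\Sigma_{p,q}^0u\le 2\,\E\Bigl(\sum_{j=1}^p a_jX_{t-j}\Bigr)^2+2\,\E\Bigl(\sum_{k=1}^q c_k\varepsilon_{t-k}\Bigr)^2 .
\]
This avoids handling the cross-covariance between the two blocks directly.

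For the moving-average block, independence and the common variance of the innovations give the exact identity \(\E(\sum_k c_k\varepsilon_{t-k})^2=\sigma_\varepsilon^2\|c\|_2^2\le\sigma_\varepsilon^2\).

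For the autoregressive block, we use the spectral density \(f_X(\lambda)=\sigma_\varepsilon^2|\Theta_0(e^{-i\lambda})|^2/(2\pi|\Phi_0(e^{-i\lambda})|^2)\). Then
\[
\E\Bigl(\sum_j a_jX_{t-j}\Bigr)^2=\int_{-\pi}^{\pi}\Bigl|\sum_j a_je^{-ij\lambda}\Bigr|^2 f_X(\lambda)\,d\lambda\le 2\pi\|f_X\|_\infty\|a\|_2^2,
\]
by Parseval. Assumption \ref{ass:truth}(ii) guarantees that \(\Phi_0\) has no zeros on the unit circle. Hence \(|\Phi_0(e^{-i\lambda})|\) is bounded below by a positive constant, \(\Theta_0\) is a bounded polynomial, and \(\|f_X\|_\infty<\infty\). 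As an alternative that avoids spectral language, one can use the causal representation \eqref{eq:causal-representation}: the autocovariances satisfy \(\sum_h|\gamma_X(h)|\le\sigma_\varepsilon^2(\sum_\ell|\psi_\ell|)^2<\infty\), and the Schur test bounds the operator norm of the Toeplitz matrix \((\gamma_X(i-j))\) by \(\sum_h|\gamma_X(h)|\).

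Combining the two blocks gives
\[
\lambda_{\max}(\Sigma_{p,q}^0)\le 2\max\{2\pi\|f_X\|_\infty,\sigma_\varepsilon^2\}
\]
for all \(p,q\). This bound is independent of \(p\) and \(q\), which proves \eqref{eq:population-gram-bound}.

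There is no real obstacle here. The only point requiring care is that the bound must not depend on \(p\) and \(q\). This is why we use the Toeplitz/spectral bound rather than, for instance, a trace bound, which would grow linearly in \(p+q\).
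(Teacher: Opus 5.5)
Your proof is correct, but it takes a different route from the paper.

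\textbf{The paper's route.} The paper does not split the regressor. It stacks \(U_t=(X_t,\varepsilon_t)^\top\) and writes \(v^\top Z_t^0(p,q)=\sum_{r\ge1}c_r^\top U_{t-r}\) with \(c_r=(a_r,-b_r)^\top\). It then bounds the quadratic form by a block-Toeplitz (Schur-type) estimate,
\[
v^\top\Sigma^0_{p,q}v\le C_\Gamma\|v\|_2^2,
\qquad
C_\Gamma=\sum_{h}\|\Gamma(h)\|_{\op},
\]
where \(\Gamma(h)\) is the \(2\times2\) autocovariance matrix of \(U_t\). This treats the cross-covariances \(\E(X_t\varepsilon_{t-h})=\sigma_\varepsilon^2\psi_h\) directly and relies on their absolute summability.

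\textbf{Your route.} You discard the cross terms with \((x+y)^2\le2x^2+2y^2\). You then compute the innovation block exactly by independence and bound the \(X\)-block by \(2\pi\|f_X\|_\infty\). Your alternative scalar Schur bound via \(\sum_h|\gamma_X(h)|\le\sigma_\varepsilon^2(\sum_\ell|\psi_\ell|)^2\) is precisely the \(X\)-block of the paper's argument.

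\textbf{Comparison.} Your version is more elementary, never needs the joint dependence structure of \((X_t,\varepsilon_t)\), and gives the explicit constant \(2\max\{2\pi\|f_X\|_\infty,\sigma_\varepsilon^2\}\), at the cost of a factor \(2\). The paper's version avoids the splitting constant. It also extends verbatim to any fixed finite family of jointly stationary series with absolutely summable matrix autocovariances, whereas splitting into \(k\) blocks costs a factor \(k\). For this lemma, either argument is fully adequate.
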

\begin{proof}
Let
\begin{equation}
    U_t=(X_t,\varepsilon_t)^\top,
    \qquad
    \Gamma(h)=\E\{U_tU_{t-h}^\top\},\quad h\in\mathbb Z .
\end{equation}
The finite-order causal-invertible ARMA law of \(X_t\) gives
\begin{equation}
    C_\Gamma=\sum_{h\in\mathbb Z}\|\Gamma(h)\|_{\op}<\infty .
\end{equation}
For \(v=(a_1,\ldots,a_p,b_1,\ldots,b_q)^\top\), put
\begin{equation}
    c_r=(a_r,-b_r)^\top,
    \qquad
    a_r=0\ (r>p),
    \qquad
    b_r=0\ (r>q).
\end{equation}
Then
\begin{equation}
    v^\top Z_t^0(p,q)=\sum_{r\ge1}c_r^\top U_{t-r},
\end{equation}
and
\begin{align}
    v^\top\Sigma_{p,q}^0v
    &=\sum_{r\ge1}\sum_{s\ge1}c_r^\top\Gamma(s-r)c_s                                      \\
    &\le \sum_{h\in\mathbb Z}\|\Gamma(h)\|_{\op}
       \sum_{r\ge1}\|c_r\|_2\|c_{r+h}\|_2                                               \\
    &\le C_\Gamma\sum_{r\ge1}\|c_r\|_2^2
     =C_\Gamma\|v\|_2^2,
\end{align}
where \(c_{r+h}=0\) if \(r+h<1\).  Taking the supremum over \(\|v\|_2=1\) proves \eqref{eq:population-gram-bound}.
\end{proof}

We next control the ideal empirical losses uniformly over the growing rectangle.
\begin{lemma}\label{lem:uniform-compact-concentration}
Under Assumption \ref{ass:truth},
\begin{equation}
    \max_{0\le p\le P_n,\,0\le q\le Q_n}
    \sup_{\gamma\in\mathcal K_{p,q}(K)}
    \left|
    \ell^0_{n,p,q}(\gamma)-L^0_{p,q}(\gamma)
    \right|
    =O_p\left\{\left(\frac{M_n\log n}{N}\right)^{1/2}\right\},
\end{equation}
where
\begin{equation}
    \ell^0_{n,p,q}(\gamma)=
    N^{-1}\sum_{t\in\mathcal I_n}\{X_t-Z_t^0(p,q)^\top\gamma\}^2 .
\end{equation}
\end{lemma}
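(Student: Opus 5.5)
The plan is to reduce the uniform statement over all candidate pairs and all \(\gamma\) in the ball to a single operator-norm bound on one "maximal" Gram matrix. Put \(V_t=(X_t,Z_t^0(P_n,Q_n)^\top)^\top\in\R^{1+s_n}\). Every candidate regressor \(Z_t^0(p,q)\) with \(p\le P_n\), \(q\le Q_n\) is a coordinate sub-vector of \(Z_t^0(P_n,Q_n)\). Hence for each \((p,q)\) and \(\gamma\in\mathcal K_{p,q}(K)\) there is a zero-padded vector \(\bar\gamma\in\R^{s_n}\) with \(\|\bar\gamma\|_2=\|\gamma\|_2\le K\) and
\[
X_t-Z_t^0(p,q)^\top\gamma=v^\top V_t,\qquad v=(1,-\bar\gamma^\top)^\top,\qquad \|v\|_2^2\le 1+K^2 .
\]
This gives \(\ell^0_{n,p,q}(\gamma)-L^0_{p,q}(\gamma)=v^\top A_nv\), where
\[
A_n=N^{-1}\sum_{t\in\mathcal I_n}V_tV_t^\top-\E V_tV_t^\top .
\]
Taking the maximum over the rectangle and the supremum over the balls yields
\[
\max_{p,q}\sup_{\gamma}|\ell^0_{n,p,q}(\gamma)-L^0_{p,q}(\gamma)|\le(1+K^2)\|A_n\|_{\op}.
\]
So the double supremum over a growing family collapses into a single quadratic form, and no union bound over the \((P_n+1)(Q_n+1)\) pairs is needed.

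Next I bound \(\|A_n\|_{\op}\) with the covariance part \eqref{eq:finite-section-covariance} of Lemma \ref{lem:finite-section-concentration}, applied with \(W_{t,n}=V_t\). The coordinates of \(V_t\) are lags \(0,\dots,P_n\) of \(X\) and lags \(1,\dots,Q_n\) of \(\varepsilon\). Their dimension is \(d_n=1+s_n\le 3M_n\), so we may take \(b_n=M_n\). The growth condition \(M_n\log n/N\to0\) holds as a consequence of the penalty window \eqref{eq:sieve-penalty-window}, since \(\omega_n\to0\). The lemma then gives \(\|A_n\|_{\op}=O_p\{(M_n\log n/N)^{1/2}\}\), and the claim follows because \(K\) is fixed.

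The step needing care is the applicability of Lemma \ref{lem:finite-section-concentration}. Its hypothesis speaks of coordinates "chosen from finitely many lags", whereas here the number of lags grows like \(M_n\). I read the lemma as allowing \(d_n\le Cb_n\) lags, which is exactly its dimension condition. Its proof only needs each direction \(u^\top V_t\), with \(\|u\|_2=1\), to be a linear process in \((\varepsilon_t)\) with uniformly controlled second moment and dependence, so that the one-direction Bernstein inequality \eqref{eq:one-direction-bernstein} holds uniformly in \(u\).

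To check that uniformity, I use the bound \(\lambda_{\max}(\E V_tV_t^\top)\le C\) uniformly in \(n\). This follows from Lemma \ref{lem:pop-cov-bound}'s argument applied to the stacked \(U_t=(X_t,\varepsilon_t)\), with the extra lag-\(0\) coordinate included. Together with the geometric decay \eqref{eq:causal-representation}, the variables \(u^\top V_t\) are uniformly sub-Gaussian linear processes with uniformly summable physical-dependence coefficients. That is all the net argument in Lemma \ref{lem:finite-section-concentration} uses. Finally, the \(\log n\) factor in the rate absorbs the \(9^{d_n}\) net cardinality, exactly as in that proof.
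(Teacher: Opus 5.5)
Your proposal is correct and is essentially the paper's proof. The paper also writes $\ell^0_{n,p,q}(\gamma)-L^0_{p,q}(\gamma)=a(\gamma)^\top A_n(p,q)\,a(\gamma)$ with $\|a(\gamma)\|_2^2\le 1+K^2$, bounds every $\|A_n(p,q)\|_{\op}$ by the deviation matrix of the single maximal vector $V^{\max}_{t,n}$ (your zero-padding is the same principal-submatrix observation), and applies Lemma~\ref{lem:finite-section-concentration} with $b_n=M_n+1$.

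One caution about your side remark: when $Q_n\to\infty$ the directions $u^\top V_t$ do not have uniformly summable physical-dependence coefficients, so that remark does not establish the uniformity in \eqref{eq:one-direction-bernstein}; like the paper, you are taking it from Lemma~\ref{lem:finite-section-concentration}. For example, the flat unit vector on the $\varepsilon$-block gives $u^\top V_t=-Q_n^{-1/2}\sum_{k=1}^{Q_n}\varepsilon_{t-k}$, whose coefficients have $\ell^1$-norm $\sqrt{Q_n}$ and for which $\operatorname{Var}(u^\top A_n u)\asymp Q_n/N$.
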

\begin{proof}
Let \(d_{p,q}=p+q\) and define
\begin{equation}
    V_t(p,q)=(X_t,Z_t^0(p,q)^\top)^\top\in\mathbb R^{1+d_{p,q}},
    \qquad
    a(\gamma)=(1,-\gamma^\top)^\top .
\end{equation}
Then
\begin{align}
    \ell^0_{n,p,q}(\gamma)-L^0_{p,q}(\gamma)
    &=a(\gamma)^\top A_n(p,q)a(\gamma),\\
    A_n(p,q)&=N^{-1}\sum_{t\in\mathcal I_n} V_t(p,q)V_t(p,q)^\top
    -\E\{V_t(p,q)V_t(p,q)^\top\},
\end{align}
and
\begin{equation}
    \|a(\gamma)\|_2^2\le 1+K^2=:A_K^2.
\end{equation}
Set
\begin{equation}
    V_{t,n}^{\max}=(X_t,X_{t-1},\ldots,X_{t-P_n},-\varepsilon_{t-1},\ldots,-\varepsilon_{t-Q_n})^\top .
\end{equation}
Since every \(V_t(p,q)\) is a coordinate subvector of \(V_{t,n}^{\max}\), Lemma \ref{lem:finite-section-concentration} with \(b_n=M_n+1\) gives
\begin{equation}
    \max_{p,q}\|A_n(p,q)\|_{\op}
    \le
    \left\|
    N^{-1}\sum_{t\in\mathcal I_n}V_{t,n}^{\max}(V_{t,n}^{\max})^\top
    -\E\{V_{t,n}^{\max}(V_{t,n}^{\max})^\top\}
    \right\|_{\op}
    =O_p\left\{\left(\frac{M_n\log n}{N}\right)^{1/2}\right\}.
\end{equation}
Therefore
\begin{align}
    \max_{p,q}\sup_{\gamma\in\mathcal K_{p,q}(K)}
    |\ell^0_{n,p,q}(\gamma)-L^0_{p,q}(\gamma)|
    &\le A_K^2\max_{p,q}\|A_n(p,q)\|_{\op}\\
    &=O_p\left\{\left(\frac{M_n\log n}{N}\right)^{1/2}\right\}.
\end{align}
\end{proof}

The next estimate transfers the ideal ARMA comparison to the feasible filtered criterion.
\begin{lemma}\label{lem:uniform-resvar}
Under Assumption \ref{ass:truth}, Lemma \ref{lem:gph-rate}, Assumption \ref{ass:hn-growth}, and the penalty window \eqref{eq:sieve-penalty-window},
\begin{equation}\label{eq:uniform-resvar}
    \max_{0\le p\le P_n,\,0\le q\le Q_n}
    \left|
    \widehat\sigma_n^2(p,q)-\sigma_{K,p,q}^{2,*}
    \right|
    =O_p(\omega_n),
\end{equation}
and
\begin{equation}\label{eq:uniform-log-resvar}
    \max_{0\le p\le P_n,\,0\le q\le Q_n}
    \left|
    \log\widehat\sigma_n^2(p,q)-\log\sigma_{K,p,q}^{2,*}
    \right|
    =O_p(\omega_n).
\end{equation}
\end{lemma}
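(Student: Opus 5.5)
The plan is to insert the ideal empirical loss \(\ell^0_{n,p,q}\) between the feasible criterion and the population benchmark. We then control the two gaps separately, uniformly over \((p,q)\) and \(\gamma\in\mathcal K_{p,q}(K)\). Write
\[
\widehat\ell_{n,p,q}(\gamma)=N^{-1}\sum_{t\in\mathcal I_n}\{\widetilde X_t-\widetilde Z_t(p,q)^\top\gamma\}^2 .
\]
Since the infimum over a fixed compact set is \(1\)-Lipschitz in the sup norm, we have
\[
|\widehat\sigma_n^2(p,q)-\sigma^{2,*}_{K,p,q}|\le \sup_{\gamma}|\widehat\ell_{n,p,q}(\gamma)-\ell^0_{n,p,q}(\gamma)|+\sup_\gamma|\ell^0_{n,p,q}(\gamma)-L^0_{p,q}(\gamma)|.
\]
The second supremum is \(O_p\{(M_n\log n/N)^{1/2}\}\), uniformly over the rectangle, by Lemma \ref{lem:uniform-compact-concentration}. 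This accounts for the first term of \(\omega_n\).

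For the first supremum, put \(R^0_t(\gamma)=X_t-Z^0_t(p,q)^\top\gamma\) and let \(\widetilde R_t(\gamma)\) be the feasible analogue. Write \(\widetilde R_t(\gamma)=R^0_t(\gamma)+E_t(\gamma)\), where
\[
E_t(\gamma)=\Delta X_t-\sum_{j\le p}a_j\Delta X_{t-j}+\sum_{k\le q}c_k(\widetilde e_{t-k}-\varepsilon_{t-k}).
\]
Then \(|\widehat\ell-\ell^0|\le 2\|R^0(\gamma)\|_N\|E(\gamma)\|_N+\|E(\gamma)\|_N^2\).

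We first bound \(\|E(\gamma)\|_N\) by Cauchy--Schwarz on the coefficient vector, using \(\|\gamma\|_2\le K\):
\[
\|E(\gamma)\|_N\le \|\Delta X\|_N+K\Bigl(\sum_{j\le P_n}\|\Delta X_{\cdot-j}\|_N^2+\sum_{k\le Q_n}\|\widetilde e_{\cdot-k}-\varepsilon_{\cdot-k}\|_N^2\Bigr)^{1/2}.
\]
The shifted sums are needed on the enlarged lag range. There we reuse the shifted-sample argument already used in the proof of Lemma \ref{lem:hr-residual-approx}, which gives \(\max_{j\le M_n}\|\Delta X_{\cdot-j}\|_N=O_p(a_{d,n}\log n)\). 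The same decomposition \eqref{eq:residual-error-decomp}, applied to each shifted sample, gives \(\max_{k\le M_n}\|\widetilde e_{\cdot-k}-\varepsilon_{\cdot-k}\|_N=O_p(r_{e,n})\). This is legitimate because the bounds used there (on \(\widehat\varphi\), \(\lambda_{\max}\) of the shifted Gram matrix, and \(\tau_h\)) are uniform in the shift, via a union over \(M_n\) shifts in Lemma \ref{lem:finite-section-concentration}. Since \(a_{d,n}\log n\le r_{e,n}\), this yields
\[
\max_{p,q}\sup_\gamma\|E(\gamma)\|_N=O_p(M_n^{1/2}r_{e,n}).
\]
Next, \(\|R^0(\gamma)\|_N^2=\ell^0_{n,p,q}(\gamma)\le L^0_{p,q}(\gamma)+o_p(1)\). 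Moreover, \(L^0_{p,q}(\gamma)\le 2\E X_t^2+2K^2\sup_{p,q}\lambda_{\max}(\Sigma^0_{p,q})\), which is bounded by Lemma \ref{lem:pop-cov-bound}. So \(\|R^0\|_N=O_p(1)\) uniformly. Together these give the rate \(M_n^{1/2}r_{e,n}+M_nr_{e,n}^2\), and hence \eqref{eq:uniform-resvar}.

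For \eqref{eq:uniform-log-resvar}, we use \(\sigma^{2,*}_{K,p,q}\ge\sigma_\varepsilon^2>0\) from \eqref{eq:s5-loss-decomposition}. The penalty window gives \(\omega_n\to0\), so on an event of probability tending to one every \(\widehat\sigma_n^2(p,q)\ge\sigma_\varepsilon^2/2\). The mean value theorem for \(\log\) on \([\sigma_\varepsilon^2/2,\infty)\) then turns \eqref{eq:uniform-resvar} into \eqref{eq:uniform-log-resvar}, with the constant \(2/\sigma_\varepsilon^2\).

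The main obstacle is the uniformity over lags up to \(M_n\) for the residual error \(\widetilde e_{t-k}-\varepsilon_{t-k}\). Lemma \ref{lem:hr-residual-approx} is stated only on \(\mathcal I_n\), while the regressors use residuals on the enlarged range. Note that \(\widehat\varphi\) is a single fixed estimate, so only the random terms \(\Delta X_{t-k,h}\), \(X_{t-k,h}\) and \(\tau_{h,t-k}\) shift. I would bound \(\max_k\) of each of these terms by comparing the shifted sample sums to sums over an enlarged common index set of size \(N+M_n\asymp N\). This makes all shifted norms dominated by a single norm over the enlarged set, up to a factor \((1+M_n/N)^{1/2}\).
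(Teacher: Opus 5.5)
Your proposal is correct and follows the paper's proof essentially step by step. You insert the ideal empirical loss \(\ell^0_{n,p,q}\), control one gap by Lemma \ref{lem:uniform-compact-concentration} and the other by the perturbation \(\widetilde R_t=R^0_t+E_t\) with the Cauchy--Schwarz bound \(O_p(M_n^{1/2}r_{e,n})\), and then pass to logarithms using \(\sigma_{K,p,q}^{2,*}\ge\sigma_\varepsilon^2\).

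Your departures from the paper are harmless refinements. You bound \(\|R^0\|_N\) through \(L^0_{p,q}\) and Lemma \ref{lem:pop-cov-bound} rather than through the empirical Gram eigenvalue. You also make explicit that all shifted norms are dominated by a single norm over the enlarged index set, which the paper leaves implicit. That domination, not a union over shifts, is what correctly handles the Markov-controlled terms \(\tau_{h,t-k}\) and \(\Delta X_{t-k}\).
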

\begin{proof}
Define the ideal empirical restricted variance
\begin{equation}
    \widehat\sigma_{0,n}^2(p,q)=
    \inf_{\gamma\in\mathcal K_{p,q}(K)}\ell^0_{n,p,q}(\gamma).
\end{equation}
By Lemma \ref{lem:uniform-compact-concentration},
\begin{align}\label{eq:s5-ideal-pop-bound}
    \max_{p,q}
    \left|\widehat\sigma_{0,n}^2(p,q)-\sigma_{K,p,q}^{2,*}\right|
    &\le
    \max_{p,q}\sup_{\gamma\in\mathcal K_{p,q}(K)}
    |\ell^0_{n,p,q}(\gamma)-L^0_{p,q}(\gamma)| \\
    &=O_p\left\{\left(\frac{M_n\log n}{N}\right)^{1/2}\right\}.
\end{align}
Put
\begin{equation}
    \Delta X_t=\widetilde X_t-X_t,
    \qquad
    \Delta e_t=\widetilde e_t-\varepsilon_t,
    \qquad
    \Delta Z_t(p,q)=\widetilde Z_t(p,q)-Z_t^0(p,q).
\end{equation}
By Lemmas \ref{lem:filtering-perturbation} and \ref{lem:hr-residual-approx}, applied on the common interior lag range,
\begin{equation}\label{eq:s5-basic-delta-rates}
    \|\Delta X\|_N=O_p(a_{d,n}\log n),
    \qquad
    \|\Delta e\|_N=O_p(r_{e,n}),
    \qquad
    a_{d,n}\log n\le r_{e,n}
\end{equation}
for all large \(n\).  Moreover,
\begin{align}
    \sup_{p,q}N^{-1}\sum_{t\in\mathcal I_n}\|\Delta Z_t(p,q)\|_2^2
    &\le
    M_n\|\Delta X\|_N^2+M_n\|\Delta e\|_N^2 \\
    &=O_p(M_nr_{e,n}^2).
\end{align}
Hence
\begin{equation}\label{eq:s5-deltaZ-norm-rate}
    \sup_{p,q}
    \left(N^{-1}\sum_{t\in\mathcal I_n}\|\Delta Z_t(p,q)\|_2^2\right)^{1/2}
    =O_p(M_n^{1/2}r_{e,n}).
\end{equation}
Let
\begin{equation}
    Z_{t,n}^{\max}=(X_{t-1},\ldots,X_{t-P_n},-\varepsilon_{t-1},\ldots,-\varepsilon_{t-Q_n})^\top .
\end{equation}
By Lemmas \ref{lem:pop-cov-bound} and \ref{lem:finite-section-concentration},
\begin{equation}
    \max_{p,q}\lambda_{\max}\left(N^{-1}\sum_{t\in\mathcal I_n}Z_t^0(p,q)Z_t^0(p,q)^\top\right)
    \le
    \lambda_{\max}\left(N^{-1}\sum_{t\in\mathcal I_n}Z_{t,n}^{\max}(Z_{t,n}^{\max})^\top\right)
    =O_p(1).
\end{equation}
Consequently,
\begin{align}
    \sup_{p,q}\sup_{\gamma\in\mathcal K_{p,q}(K)}
    \|X-Z^0(p,q)\gamma\|_N
    &\le \|X\|_N+
    K\max_{p,q}\lambda_{\max}^{1/2}
    \left(N^{-1}\sum_{t\in\mathcal I_n}Z_t^0(p,q)Z_t^0(p,q)^\top\right) \\
    &=O_p(1).
\end{align}
For \(\gamma\in\mathcal K_{p,q}(K)\), define
\begin{equation}
    A_t(p,q,\gamma)=X_t-Z_t^0(p,q)^\top\gamma,
    \qquad
    B_t(p,q,\gamma)=\Delta X_t-\Delta Z_t(p,q)^\top\gamma.
\end{equation}
Then
\begin{equation}
    \widetilde X_t-\widetilde Z_t(p,q)^\top\gamma
    =A_t(p,q,\gamma)+B_t(p,q,\gamma),
\end{equation}
and \eqref{eq:s5-basic-delta-rates}--\eqref{eq:s5-deltaZ-norm-rate} imply
\begin{equation}
    \sup_{p,q}\sup_{\gamma\in\mathcal K_{p,q}(K)}
    \|B(p,q,\gamma)\|_N
    \le \|\Delta X\|_N+K\sup_{p,q}
    \left(N^{-1}\sum_{t\in\mathcal I_n}\|\Delta Z_t(p,q)\|_2^2\right)^{1/2}
    =O_p(M_n^{1/2}r_{e,n}).
\end{equation}
Therefore,
\begin{align}\label{eq:s5-loss-perturb-bound}
    &\max_{p,q}\sup_{\gamma\in\mathcal K_{p,q}(K)}
    \left|
    N^{-1}\sum_{t\in\mathcal I_n}(\widetilde X_t-\widetilde Z_t(p,q)^\top\gamma)^2
    -N^{-1}\sum_{t\in\mathcal I_n}(X_t-Z_t^0(p,q)^\top\gamma)^2
    \right|  \\
    &\quad\le
    2\sup_{p,q,\gamma}\|A(p,q,\gamma)\|_N
    \sup_{p,q,\gamma}\|B(p,q,\gamma)\|_N
    +\sup_{p,q,\gamma}\|B(p,q,\gamma)\|_N^2  \\
    &\quad=O_p(M_n^{1/2}r_{e,n}+M_nr_{e,n}^2).
\end{align}
Since \(|\inf f-\inf g|\le\sup|f-g|\), \eqref{eq:s5-loss-perturb-bound} gives
\begin{equation}\label{eq:s5-feasible-ideal-rss-bound}
    \max_{p,q}
    |\widehat\sigma_n^2(p,q)-\widehat\sigma_{0,n}^2(p,q)|
    =O_p(M_n^{1/2}r_{e,n}+M_nr_{e,n}^2).
\end{equation}
Combining \eqref{eq:s5-ideal-pop-bound}, \eqref{eq:s5-feasible-ideal-rss-bound}, and \eqref{eq:omega-n} proves \eqref{eq:uniform-resvar}.

For every \((p,q)\), \eqref{eq:s5-loss-decomposition} gives
\begin{equation}
    \sigma_{K,p,q}^{2,*}\ge \sigma_\varepsilon^2.
\end{equation}
Since the penalty window gives \(\omega_n\to0\), \eqref{eq:uniform-resvar} implies
\begin{equation}
    \Pp\left(
    \min_{0\le p\le P_n,\,0\le q\le Q_n}
    \widehat\sigma_n^2(p,q)\ge \sigma_\varepsilon^2/2
    \right)\to1.
\end{equation}
On this event,
\begin{align}
    \max_{p,q}
    \left|\log\widehat\sigma_n^2(p,q)-\log\sigma_{K,p,q}^{2,*}\right|
    &\le
    2\sigma_\varepsilon^{-2}
    \max_{p,q}
    |\widehat\sigma_n^2(p,q)-\sigma_{K,p,q}^{2,*}| \\
    &=O_p(\omega_n),
\end{align}
which proves \eqref{eq:uniform-log-resvar}.
\end{proof}

\subsection{Final proof of Theorem~\ref{thm:order-consistency}}

The preceding uniform approximation reduces the order-selection problem to the usual information-criterion comparison between underfitted and overfitted ARMA orders.  Let
\begin{equation}
    H_0=\mathrm{HIC}_n(p_0,q_0)
\end{equation}
and define the uniform logarithmic approximation error
\begin{equation}
    A_n=
    \max_{0\le p\le P_n,\,0\le q\le Q_n}
    \left|
    \log\widehat\sigma_n^2(p,q)-\log\sigma_{K,p,q}^{2,*}
    \right| .
\end{equation}
By Lemma \ref{lem:uniform-resvar},
\begin{equation}
    A_n=O_p(\omega_n).
\end{equation}
The penalty window \eqref{eq:sieve-penalty-window} implies
\begin{equation}\label{eq:s5-An-ratios}
    \frac{A_n}{\pi_n}=o_p(1).
\end{equation}
If \(\mathcal U_n\ne\varnothing\), then, since \(M_n\ge1\),
\begin{equation}\label{eq:s5-An-Delta-ratio}
    \frac{A_n}{\Delta_n}
    =O_p\left(\frac{\omega_n}{\Delta_n}\right)
    =O_p\left(\frac{\omega_n}{\pi_n}\frac{\pi_n}{\Delta_n}\right)
    =o_p(1),
    \qquad
    \frac{M_n\pi_n}{\Delta_n}\to0 .
\end{equation}

For every \((p,q)\in\mathcal U_n\),
\begin{align}
    \mathrm{HIC}_n(p,q)-H_0
    &=
    \log\widehat\sigma_n^2(p,q)-\log\widehat\sigma_n^2(p_0,q_0)
    +(p+q-p_0-q_0)\pi_n  \\
    &\ge
    \log\sigma_{K,p,q}^{2,*}-\log\sigma_\varepsilon^2
    -2A_n-2M_n\pi_n \\
    &\ge
    \Delta_n-2A_n-2M_n\pi_n .
\end{align}
Therefore \eqref{eq:s5-An-Delta-ratio} and the empty-set convention give
\begin{equation}\label{eq:s5-underfit-positive}
    \Pp\left\{
    \min_{(p,q)\in\mathcal U_n}
    \{\mathrm{HIC}_n(p,q)-H_0\}>0
    \right\}\to1.
\end{equation}

Set
\begin{equation}
    \mathcal O_n=
    \{(p,q):0\le p\le P_n,\ 0\le q\le Q_n,
    \ p\ge p_0,\ q\ge q_0,\ (p,q)\ne(p_0,q_0)\}.
\end{equation}
For every \((p,q)\in\mathcal O_n\), \eqref{eq:population-overfit} gives
\begin{equation}
    \sigma_{K,p,q}^{2,*}=\sigma_\varepsilon^2,
    \qquad
    p+q-p_0-q_0\ge1 .
\end{equation}
Hence
\begin{align}
    \mathrm{HIC}_n(p,q)-H_0
    &=
    \log\widehat\sigma_n^2(p,q)-\log\widehat\sigma_n^2(p_0,q_0)
    +(p+q-p_0-q_0)\pi_n \\
    &\ge
    -2A_n+\pi_n .
\end{align}
Together with \eqref{eq:s5-An-ratios} and the empty-set convention, this yields
\begin{equation}\label{eq:s5-overfit-positive}
    \Pp\left\{
    \min_{(p,q)\in\mathcal O_n}
    \{\mathrm{HIC}_n(p,q)-H_0\}>0
    \right\}\to1.
\end{equation}

For all large \(n\), the true order belongs to the candidate rectangle.  Every other candidate belongs to \(\mathcal U_n\cup\mathcal O_n\).  Combining \eqref{eq:s5-underfit-positive} and \eqref{eq:s5-overfit-positive},
\begin{equation}
    \Pp\left\{
    \min_{\substack{0\le p\le P_n,\,0\le q\le Q_n\\(p,q)\ne(p_0,q_0)}}
    \bigl[\mathrm{HIC}_n(p,q)-\mathrm{HIC}_n(p_0,q_0)\bigr]
    >0
    \right\}\to1.
\end{equation}
Thus every minimizer in \eqref{eq:selector} equals \((p_0,q_0)\) with probability tending to one, which proves \eqref{eq:main-consistency}.  \(\square\)

\end{document}